\documentclass[a4paper,10pt]{amsart}
\usepackage{amsmath,amssymb}
\usepackage{graphicx}
\usepackage{amsthm}
\usepackage{enumerate}
\theoremstyle{definition}
\newtheorem{theorem}{Theorem}[section]
\newtheorem{definition}[theorem]{Definition}
\newtheorem{lemma}[theorem]{Lemma}
 
\newtheorem{remark}[theorem]{Remark}
\newtheorem{notation}[theorem]{Notation}
\newtheorem{algorithm}[theorem]{Algorithm}
\newtheorem{observation}[theorem]{Observation}
\newtheorem*{acknowledgements}{Acknowledgements}
\title{Avoiding $\sigma$-porous sets in Hilbert Spaces.}

\begin{document}
\author{Michael Dymond}\footnote{The author was supported by EPSRC funding.}
\email{dymondm@maths.bham.ac.uk}
\address{School of Mathematics\\ University of Birmingham\\
Birmingham\\
B15 2TT\\
UK}
\begin{abstract}
We give a constructive proof that any $\sigma$-porous subset of a Hilbert space has Lebesgue measure zero on typical $C^{1}$ curves. Further, we discover that this result does not extend to all forms of porosity; we find that even power-$p$ porous sets may meet many $C^{1}$ curves in positive measure. 
\end{abstract}

\maketitle
\section{Introduction}
Porous and $\sigma$-porous sets form a class of exceptional sets which arise naturally in the study of differentiability of convex and Lipschitz functions. The concept of $\sigma$-porous sets was introduced by Dolzhenko in \cite{dolvzenko67}, and the connection between porosity and differentiability is investigated in \cite{lindenstrausstiserpreiss12}, \cite{preiss90} and \cite{preisszajicek01}. We introduce, in the following definition, various types of porosity which we will work with in this paper.
\begin{definition}\label{porous}
Let $(M,d)$ be a metric space, $c\in(0,1)$ and $p>1$.
\begin{enumerate}[(i)]
\item A subset $E$ of $M$ is said said to be $c$-porous at a point $x\in{E}$ if for every $\epsilon>0$ there exists a point $h\in{M}$ and a real number $r>0$ such that
\begin{equation*}
d(x,h)<\epsilon\mbox{, }B(h,r)\cap{E}=\emptyset\mbox{ and }r>c\cdot{d(x,h)}.
\end{equation*}
Here $B(h,r)$ denotes the open ball in $M$ with centre $h$ and radius $r$.
\item We say that $E$ is a $c$-porous subset of $M$ if $E$ is $c$-porous at every point $x\in{E}$. A subset $E$ of $M$ is called porous if it is $c$-porous for some $c\in(0,1)$.
\item A subset $P$ of $M$ is said to be $\sigma$-porous if it can be expressed as a countable union of porous subsets of $M$.
\item A subset $P$ of $M$ is said to be power-$p$-porous at a point $x\in{P}$ if whenever $\epsilon>0$, there exists $h\in{M}$ and $r>0$ such that $d(h,x)<\epsilon$, $B(h,r)\cap{P}=\emptyset$ and $r>d(h,x)^{p}$. $P$ is called power-$p$-porous if $P$ is power-$p$-porous at every point $x\in{P}$.
\end{enumerate}
\end{definition}
An immediate consequence of the definition is that $\sigma$-porous sets are of first category. Moreover, using the Lebesgue density theorem, $\sigma$-porous sets in finite dimensional spaces have Lebesgue measure zero. However, in infinite dimensional settings, where there is no analogue of the Lebesgue measure (see \cite{hunt92}), it is more difficult to describe the size of $\sigma$-porous sets. In our main result we exhibit a phenomenon of the $\sigma$-porous subsets of Hilbert spaces which reveals that they are also, in some sense, very small. Namely that any $\sigma$-porous subset of a Hilbert space is avoided by typical curves. More information about porosity and $\sigma$-porosity can be found in the survey \cite{zajicek75}, and, for a broader introduction to negligible sets relating to differentiability, see \cite[Chapter 6]{benyaminilindenstrauss00}. 

The notion of a set being $\Gamma_{n}$ null was introduced by Lindenstrauss, Preiss and Ti\v{s}er in \cite[Chapter 5]{lindenstrausstiserpreiss12}. For a Banach space $X$, we denote by $\Gamma_{n}(X)$ the Banach space $C^{1}([0,1]^{n},X)$ of continuously differentiable maps from $[0,1]^{n}$ to $X$.
\begin{definition}\label{gammanull}
A Borel subset $A$ of a Banach space $X$ is called $\Gamma_{n}$-null if 
\begin{equation*}
\mathcal{L}^{n}\left\{t\in[0,1]^{n}:\gamma(t)\in{A}\right\}=0
\end{equation*}
for residually many $\gamma\in\Gamma_{n}(X)$.
\end{definition}
It has been established, in the recent book \cite{lindenstrausstiserpreiss12} of Lindenstrauss, Preiss and Ti\v{s}er, that every $\sigma$-porous subset of a separable Banach space having a separable dual is $\Gamma_{1}$-null \cite[Theorem 10.4.1]{lindenstrausstiserpreiss12}. In the present paper we give a new proof for $\sigma$-porous sets in Hilbert spaces. Our argument is noteworthy because it is constructive and presents a method of finding many curves which avoid a given porous set. 

There have been notable discoveries of the opposite nature. In, \cite{speight12} Speight proves that for every Banach space $X$ and integer $n$ with $2<n<\dim X$, there exists a directionally porous set $P\subset X$ which is not $\Gamma_{n}$-null. Thus, our main result fails if we try to replace curves with $n$-dimensional surfaces, where $n>2$. Further, \cite{maleva07} verifies the existence of an `unavoidable' $\sigma$-porous set - a $\sigma$-porous set whose complement is null on all Lipschitz curves. Such a set can be found in any Banach space containing $l_{1}$.
\subsection{Structure of the paper}
We work in a fixed Hilbert space $H$. In Section 2, we introduce a method of altering a given curve in $\Gamma_{1}(H)$ in order to obtain a nearby curve which avoids a given porous set $E$. This is achieved by pushing segments of the original curve inside the holes of $E$. Critical to our approach will be an application of the Vitali Covering Theorem \cite[Chapter 2]{mattila99} which allows us to nominate suitable segments of the curve to modify.   

The crucial lemma (Lemma \ref{mainlemma}) in the proof of our main result is stated at the beginning of Section 3 and the majority of this section is devoted to its proof. We present an algorithm in which we repeatedly apply the results established in Section 2, in order to construct a finite sequence of curves so that the final curve is close to the original and we have good control over the measure of a porous set $E$ on all curves in some neighbourhood. The biggest problem we face is ensuring that the difference between the final curve and our original does not become too large. We overcome this difficulty by ensuring that the difference in the derivatives behaves like a martingale. This allows us to control the size of the set where the difference in derivatives becomes too large. Here, we require Kolmogorov's Martingale Theorem, quoted below from \cite[p237]{taylor97}:
\begin{theorem}\label{kolmogorov}
Let $\left\{M_{n}\right\}$ be a martingale with respect to the filtration
$\left\{\mathcal{G}_{n}\right\}$ on a probability space
$(\Omega,\mathcal{G},\mathbb{P})$. Assume
$\mathbb{E}\left(\left\|M_{n}\right\|^{2}\right)<\infty$ for all $n$. Then
for any $\kappa>0$
\begin{equation*}
\mathbb{P}\left[\max_{1\leq{n}\leq{N}}\left\|M_{n}\right\|\geq{\kappa}\right]\leq\frac{\mathbb{E}\left(\left\|M_{N}\right\|^{2}\right)}{\kappa^{2}}.
\end{equation*}
\end{theorem}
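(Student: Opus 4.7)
The plan is to reduce the maximal inequality to a single Chebyshev-style bound by exploiting the submartingale property of the nonnegative sequence $\{\|M_n\|^2\}$, and then decomposing the event $\{\max_n \|M_n\|\ge\kappa\}$ according to the first time the threshold is reached. This is the classical Doob maximal inequality strategy.

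First, I would verify that $\{\|M_n\|^2\}$ is a submartingale with respect to $\{\mathcal{G}_n\}$. Since $x\mapsto\|x\|^2$ is convex on the underlying Hilbert (or Banach) space, the conditional Jensen inequality gives
\begin{equation*}
\mathbb{E}\bigl(\|M_{n+1}\|^{2}\mid\mathcal{G}_{n}\bigr)\;\ge\;\bigl\|\mathbb{E}(M_{n+1}\mid\mathcal{G}_{n})\bigr\|^{2}\;=\;\|M_{n}\|^{2},
\end{equation*}
where the equality uses the martingale property of $\{M_n\}$. The integrability hypothesis $\mathbb{E}\|M_n\|^2<\infty$ ensures everything is well defined.

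Next, I would stratify the bad event by the first index at which $\|M_n\|$ crosses $\kappa$. Set
\begin{equation*}
A_{n}=\bigl\{\|M_{1}\|<\kappa,\ldots,\|M_{n-1}\|<\kappa,\;\|M_{n}\|\ge\kappa\bigr\},\qquad n=1,\ldots,N.
\end{equation*}
The sets $A_n$ are pairwise disjoint, each $A_n\in\mathcal{G}_n$, and their union is exactly $\{\max_{1\le n\le N}\|M_n\|\ge\kappa\}$. On $A_n$ we have $\|M_n\|^2\ge\kappa^2$, and the submartingale property combined with $A_n\in\mathcal{G}_n$ yields
\begin{equation*}
\mathbb{E}\bigl(\|M_{N}\|^{2}\mathbf{1}_{A_{n}}\bigr)=\mathbb{E}\bigl(\mathbb{E}(\|M_{N}\|^{2}\mid\mathcal{G}_{n})\mathbf{1}_{A_{n}}\bigr)\ge\mathbb{E}\bigl(\|M_{n}\|^{2}\mathbf{1}_{A_{n}}\bigr)\ge\kappa^{2}\,\mathbb{P}(A_{n}).
\end{equation*}
Summing over $n=1,\ldots,N$ and using disjointness gives $\mathbb{E}(\|M_N\|^2)\ge\kappa^2\,\mathbb{P}(\max_{1\le n\le N}\|M_n\|\ge\kappa)$, and dividing by $\kappa^2$ is the claim.

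There is no real obstacle here; the only subtle point is justifying the convexity step for the norm in the Banach-space valued case, which is immediate in a Hilbert space because $\|x\|^2=\langle x,x\rangle$ is manifestly convex, and the conditional Jensen inequality applies componentwise (or via separability, reducing to the scalar situation). The argument is essentially a souped-up Chebyshev inequality, where the time-ordered stratification $A_n$ and the submartingale property together allow the single random variable $\|M_N\|^2$ to dominate the maximum.
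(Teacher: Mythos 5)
Your proof is correct. Note, however, that the paper does not prove this statement at all: it is imported verbatim from a textbook (cited as Taylor, p.~237) and used as a black box in Section 3, so there is no proof in the paper to compare against. What you have written is the classical first-passage-time proof of Doob's/Kolmogorov's maximal inequality: the conditional Jensen step showing $\{\|M_n\|^2\}$ is a submartingale, the stratification by the sets $A_n$ (first index at which the threshold is reached, each lying in $\mathcal{G}_n$), and the tower-property estimate $\mathbb{E}(\|M_N\|^2\mathbf{1}_{A_n})\ge\kappa^2\,\mathbb{P}(A_n)$. All steps are sound, and your remark about the vector-valued case is handled correctly: in a Hilbert space the inequality $\mathbb{E}(\|M_{n+1}\|^2\mid\mathcal{G}_n)\ge\|M_n\|^2$ also follows directly by expanding $\mathbb{E}(\|M_{n+1}-M_n\|^2\mid\mathcal{G}_n)\ge 0$, which avoids any appeal to a general Banach-valued conditional Jensen inequality.
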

Finally, in Section 4 we investigate the behaviour of power-$p$-porous sets on $C^{1}$-curves, finding that they can behave rather differently, in this respect, to conventional porous or $\sigma$-porous sets. We prove that there is a measure zero power-$p$-porous subset of the plane, which is not $\Gamma_{1}$-null.
\begin{notation}\label{notation}
For the fixed Hilbert space $H$ and a mapping $f:[0,1]\to{H}$, we shall write $\left\|f\right\|$ for the sup-norm of $f$. In particular the norm of a curve $\gamma\in\Gamma_{1}(H)$ is expressed as $\left\|\gamma\right\|+\left\|\gamma'\right\|$. We will also use $\left\|-\right\|$ to denote the norm on the Hilbert space $H$. It will be clear from the context which norm is intended. Given a mapping $\phi:[0,1]\to H$ and a point $t\in[0,1]$ where $\phi$ is differentiable, we will often identify the linear map $\phi'(t)\in\mathcal{L}([0,1],H)$ with the unique vector $v\in H$ satisfying $\phi'(t)(s)=vs$ for all $s\in[0,1]$. For a subset $S$ of $[0,1]$ we denote by $\left|S\right|$, the outer Lebesgue measure of $S$. 
For a set $D\subseteq H$ we denote the interior of $D$ by $\mbox{Int}(D)$ and the closure of $D$ by $\mbox{Clos}(D)$.
\end{notation}
\begin{acknowledgements}
The author wishes to thank Professor David Preiss for a huge amount of guidance and support. The author also wishes to thank Dr. Olga Maleva for helpful discussions.
\end{acknowledgements}
\section{Avoiding a Porous Set}
Let $H$ be a Hilbert space and fix a $c$-porous set $E\subseteq H$. In this section we will describe how to modify a $C^{1}$ curve in $H$ so that the new curve avoids $E$ on some open subset of $[0,1]$. Moreover, we obtain a lower bound for the measure of this open set, using the porosity of $E$. To begin, we introduce a collection of candidate intervals on which one might consider altering the starting curve. 
\begin{lemma}\label{vitalicover} Suppose $\lambda>1$ and $\theta>0$
are real numbers. Let $f:[0,1]\to{H}$ be a $C^{1}$ curve. Then
there exists a collection $\mathcal{V}=\mathcal{V}(f,\lambda,\theta)$ of
closed subintervals of $(0,1)$ with the following properties.
\begin{enumerate}
\item $\mathcal{V}$ is a Vitali cover of $f^{-1}(E)\cap (0,1)$.
\item Each interval $I$ in $\mathcal{V}$ is of the form $I=\left[x-\lambda{d},x+\lambda{d}\right]\subset(0,1)$, where $x\in{f^{-1}(E)}$ and $d>0$ are such that there exists $h\in{H}$ and $r>0$ satisfying $d=\left\|h-f(x)\right\|<\theta$, $B(h,r)\cap{E}=\emptyset$ and $r>cd$.
\end{enumerate}
\end{lemma}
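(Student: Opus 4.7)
I would define $\mathcal{V}=\mathcal{V}(f,\lambda,\theta)$ to be the collection of \emph{all} closed subintervals of $(0,1)$ that admit a representation of the form demanded by item~(2). Then property (2) holds by construction, and the only real work is to verify property (1): that the resulting family is a Vitali cover of $f^{-1}(E)\cap(0,1)$.

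To check the Vitali property, I would fix an arbitrary $x_{0}\in f^{-1}(E)\cap(0,1)$ and an arbitrary $\delta>0$, and produce $I\in\mathcal{V}$ with $x_{0}\in I$ and $\mathrm{diam}(I)<\delta$. Since $f(x_{0})\in E$ and $E$ is $c$-porous at $f(x_{0})$, applying Definition~\ref{porous}(i) with parameter $\epsilon>0$ (to be specified) produces $h\in H$ and $r>0$ such that $d:=\|h-f(x_{0})\|<\epsilon$, $B(h,r)\cap E=\emptyset$, and $r>cd$. Note $d>0$ is automatic: otherwise $h=f(x_{0})\in E\cap B(h,r)$, contradicting emptiness of that intersection.

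The only step that requires any care is to choose $\epsilon$ small enough to enforce four conditions simultaneously: $d<\theta$ (so that item~(2) is satisfied as stated); $2\lambda d<\delta$ (so that the interval has the required diameter); and $\lambda d<x_{0}$ together with $\lambda d<1-x_{0}$ (so that $I=[x_{0}-\lambda d,\,x_{0}+\lambda d]$ is contained in $(0,1)$). All four are met at once by taking
\[
\epsilon<\min\left\{\theta,\ \frac{\delta}{2\lambda},\ \frac{x_{0}}{\lambda},\ \frac{1-x_{0}}{\lambda}\right\}.
\]
The resulting interval $I$ lies in $\mathcal{V}$, contains $x_{0}$, and has diameter less than $\delta$, which establishes the Vitali property.

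There is no substantive obstacle here; the argument is essentially a direct translation of the $c$-porosity hypothesis at the point $f(x_{0})$ into a statement about subintervals of $(0,1)$ centred at $x_{0}$. The only mild subtlety is that the inflation factor $\lambda>1$ enlarges the candidate interval beyond the natural porosity scale $d$, so $\epsilon$ must be shrunk enough to prevent spilling out of $(0,1)$ near the endpoints. This costs nothing, since $\epsilon$ may be taken as small as we wish in the definition of porosity.
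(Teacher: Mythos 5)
Your proposal is correct and follows essentially the same route as the paper: both apply the $c$-porosity of $E$ at $f(x_0)$ with a tolerance shrunk below $\min\{\theta, x_0/\lambda, (1-x_0)/\lambda\}$ (and, for the Vitali property, below the target diameter) to produce the interval $[x_0-\lambda d, x_0+\lambda d]\subset(0,1)$. The only cosmetic difference is that the paper parametrises the family by an auxiliary $\xi\in(0,\theta)$ rather than quantifying over $\delta$ directly.
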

\begin{proof} Given $x\in{f^{-1}(E)\cap(0,1)}$ and $\xi\in(0,\theta)$ we can find,
using the porosity of $E$ at $f(x)$, a point $h\in{H}$ with
\begin{equation*}\label{xi}
d=\left\|h-f(x)\right\|<\min\left\{\xi,x/\lambda,(1-x)/\lambda\right\}
\end{equation*}
and a real number $r>0$ such that the conditions of property 2 are satisfied. Define a closed interval $I(x,\xi)=[x-\lambda d,x+\lambda d]\subset(0,1)$. 
The desired collection $\mathcal{V}$ is now defined by 
\begin{equation*}
\mathcal{V}=\left\{I(x,\xi)\mbox{ : }x\in{f^{-1}(E)\cap(0,1)}\mbox{ and
}0<\xi<\theta\right\}.
\end{equation*}
\end{proof}
By applying the Vitali Covering Theorem, we can extract a pairwise
disjoint collection of intervals from $\mathcal{V}$, which efficiently covers almost all of $f^{-1}(E)$. Our strategy is then to slightly modify the curve $f$ on these intervals so that the resulting curve passes through ``holes'' of $E$. In what follows, we construct a piecewise linear curve $\psi$, with the property that $f+\psi$ avoids the porous set $E$ on some open subsets of $[0,1]$. 

Suppose $M,\lambda>1$ and $\theta>0$ are real numbers. Let $f:[0,1]\to{H}$ be a $C^{1}$ curve with $\left\|f'\right\|\leq{M}$. For the remainder of this section, we let $\mathcal{I}=\left\{I_{k}\right\}_{k=1}^{\infty}$ denote a sequence of pairwise disjoint intervals $I_{k}$ from $\mathcal{V}\left(f,\lambda,\theta\right)$.
\begin{definition}\label{piecewisedef}
We define a collection of piecewise linear, continuous curves
\begin{equation}\label{phi}
\left\{\psi_{K}=\psi(f,\lambda,\theta,M,\mathcal{I},K):[0,1]\to{H}\mbox{ : }K\in\mathbb{N}\right\}
\end{equation}
by the following discussion. Let $K\geq{1}$ be an integer. We set
\begin{equation}\label{zero}
\psi_{K}(t)=0\mbox{ whenever }t\notin{\bigcup_{1\leq{k}\leq{K}}I_{k}}.
\end{equation}
Next, suppose $I$ is one of the intervals $I_{1},\ldots,I_{K}$. By
Lemma \ref{vitalicover}, there exists $x\in{f^{-1}(E)}$, $h\in H$, $0<d=\left\|h-f(x)\right\|<\theta$ and $r>0$ such that the conditions of Lemma \ref{vitalicover}, part 2 are satisfied. Let $\varphi:I\to[0,1]$ be the piecewise linear, continuous function
defined by
\begin{equation}\label{varphi1}
\varphi(t)=\begin{cases} 1-\frac{x-t}{\lambda{d}} & \mbox{ if
}t\in\left[x-\lambda{d},x\right] \\ 1-\frac{t-x}{\lambda{d}} & \mbox{ if }t\in\left[x,x+\lambda{d}\right],
\end{cases}
\end{equation}
and set
\begin{equation}\label{varphi}
\psi_{K}(t)=\varphi(t)(h-f(x))\mbox{ for all }t\in{I}.
\end{equation} 
\end{definition}
\begin{observation}\label{piecewise}
The following properties of the collection $\left\{\psi_{K}:K\in\mathbb{N}\right\}$ are immediate from Definition~\ref{piecewisedef} and Lemma~\ref{vitalicover}.
\begin{enumerate}
\item For $1\leq{k}\leq{K}$, each interval $I_{k}=[a_{k},b_{k}]$ admits a vector $p_{k}\in{H}$, a real number $\alpha_{k}$ and a
scalar function $\varphi_{k}:I_{k}\to[0,1]$ such that 
\begin{equation}\label{varphidef}
\psi_{K}(t)=\varphi_{k}(t)p_{k}\mbox{ for all }t\in{I_{k}},
\end{equation}
and 
\begin{equation*}
\varphi_{k}'(t)=\begin{cases}
\alpha_{k} & \mbox{ if }t\in(a_{k},x_{k})\\
-\alpha_{k} & \mbox{ if }t\in(x_{k},b_{k})
\end{cases}
\end{equation*}
where $x_{k}=(a_{k}+b_{k})/2$ is the midpoint of the interval $I_{k}$. In particular we have $\int_{I_{k}}\phi_{k}'=0$ for each $k$. We further note that the points where the mapping $\psi_{K}$ is not differentiable are precisely the endpoints $a_{k},b_{k}$ and the midpoint $x_{k}$ of each interval $I_{k}$ with $1\leq k\leq K$.
\item $\left\|\psi_{K}\right\|<\theta$, and $\left\|\psi_{K}'(t)\right\|\leq\frac{1}{\lambda}$ whenever $\psi_{K}$ is differentiable at $t$.
\item If $L\leq{K}$ then $\psi_{L}(t)=\begin{cases} \psi_{K}(t) & \mbox{ if }t\in{I_{k}}\mbox{ and }1\leq{k}\leq{L} \\
0 & \mbox{otherwise}
\end{cases}$.
\end{enumerate}
\end{observation}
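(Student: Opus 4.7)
The plan is to unpack Definition \ref{piecewisedef} on each interval separately and verify the three claims by direct reading of the piecewise-linear formulas; the proof is essentially bookkeeping, with no real obstacle beyond identifying the correct constants.

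For claim (1), on a given interval $I_k=[a_k,b_k]$ appearing among $I_1,\dots,I_K$, Definition \ref{piecewisedef} (via Lemma \ref{vitalicover}) supplies a point $x_k\in f^{-1}(E)$, a vector $h_k\in H$, and a number $d_k=\|h_k-f(x_k)\|$ with $I_k=[x_k-\lambda d_k,x_k+\lambda d_k]$. I would set $p_k:=h_k-f(x_k)$, $\alpha_k:=1/(\lambda d_k)$, and take $\varphi_k$ to be the tent function of (\ref{varphi1}); then (\ref{varphi}) is exactly $\psi_K(t)=\varphi_k(t)p_k$. The piecewise derivative $\varphi_k'=\alpha_k$ on $(a_k,x_k)$ and $-\alpha_k$ on $(x_k,b_k)$ follows by differentiating each linear piece, the midpoint identification $x_k=(a_k+b_k)/2$ is forced by the symmetry of the interval about $x_k$, and $\int_{I_k}\varphi_k'=\alpha_k(x_k-a_k)-\alpha_k(b_k-x_k)=0$. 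Non-differentiability at $a_k,b_k,x_k$: at $x_k$ the left and right slopes of $\varphi_k$ disagree, and at $a_k,b_k$ the function $\psi_K$ transitions from $\varphi_k p_k$ to $0$ with matching value $\varphi_k(a_k)p_k=0=\varphi_k(b_k)p_k$ but nonmatching one-sided derivatives (unless $p_k=0$, which does not occur since $d_k>0$).

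For claim (2), on each $I_k$ one has $\|\psi_K(t)\|=|\varphi_k(t)|\,\|p_k\|\leq 1\cdot d_k<\theta$ by the bound $d_k<\theta$ from Lemma \ref{vitalicover}, while $\psi_K(t)=0$ off $\bigcup_{k\le K}I_k$; taking the supremum gives $\|\psi_K\|<\theta$. At any point of differentiability in $I_k$, $\|\psi_K'(t)\|=|\varphi_k'(t)|\,\|p_k\|=\alpha_k d_k=1/\lambda$, and $\psi_K'(t)=0$ outside the union, which establishes the derivative bound.

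For claim (3), note that because the intervals $\{I_k\}$ are pairwise disjoint, the formula assigned to $\psi_K$ on $I_k$ by Definition \ref{piecewisedef} depends only on the data $(x_k,h_k,d_k)$ associated with $I_k$ itself, not on $K$. Hence for $L\le K$ and $1\le k\le L$, $\psi_L$ and $\psi_K$ coincide with $\varphi_k(t)p_k$ on $I_k$, and off $\bigcup_{k\le L}I_k$ the rule (\ref{zero}) forces $\psi_L(t)=0$. This gives the stated agreement, completing the verification.
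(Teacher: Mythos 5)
Your verification is correct and is exactly the routine unpacking of Definition~\ref{piecewisedef} and Lemma~\ref{vitalicover} that the paper leaves implicit (the Observation is stated without proof as ``immediate''). The identifications $p_k=h-f(x)$, $\alpha_k=1/(\lambda d)$, the computation $\alpha_k\|p_k\|=1/\lambda$, and the remark that the data on each $I_k$ is independent of $K$ are all as intended.
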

\begin{lemma}\label{holeestimate}
Let $K\geq{1}$ be an integer and set $\psi=\psi_{K}$. For $1\leq{k}\leq{K}$, each interval $I_{k}$ has an open subinterval $R_{k}$ such
that the following conditions hold:
\begin{enumerate}
\item There exists an open ball $B_{k}$ in $H$ with
$(f+\psi)(R_{k})\subset{B_{k}}\subset{H\setminus{E}}$.
\item $\left|R_{k}\right|\geq\frac{Q}{\lambda}\left|I_{k}\right|$
where $Q=c/4M$.
\item The midpoint $x_{k}$ of $I_{k}$ is contained in $R_{k}$ and $(f+\psi)(x_{k})$ is the centre of the ball $B_{k}$.
\item The closure of $R_{k}$ does not contain the endpoints of $I_{k}$.
\end{enumerate}
\end{lemma}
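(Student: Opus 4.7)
The plan is to exploit the exact identity $(f+\psi_K)(x_k) = h_k$, which holds because $\varphi_k(x_k) = 1$ in \eqref{varphi1}--\eqref{varphi}, and then to take $B_k = B(h_k,r_k)$ (which is already in $H\setminus E$ by Lemma~\ref{vitalicover}) and choose $R_k$ to be the symmetric open interval about $x_k$ of half-length $cd_k/(4M)$. The whole argument is then a one-sided derivative estimate showing that on $R_k$ the curve $f+\psi_K$ does not have time to escape this hole.

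Concretely, I would first set
\[
R_k = \Bigl(x_k - \tfrac{cd_k}{4M},\ x_k + \tfrac{cd_k}{4M}\Bigr)\quad\text{and}\quad B_k = B(h_k,r_k),
\]
and check the routine membership $R_k \subset I_k$: since $c<1$, $M>1$, $\lambda>1$ we have $cd_k/(4M) < \lambda d_k$, which simultaneously delivers (4) (the closure of $R_k$ has half-length strictly less than $\lambda d_k$). Property (3) is immediate since $R_k$ is centred at $x_k$ and $(f+\psi_K)(x_k)=h_k$ is the centre of $B_k$. Property (2) reduces to the equality $|R_k| = cd_k/(2M) = (c/(4M))\cdot 2d_k = (Q/\lambda)|I_k|$, using $|I_k|=2\lambda d_k$.

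The main assertion is (1). By Observation~\ref{piecewise}(2) and the hypothesis $\|f'\|\leq M$, the map $f+\psi_K$ is Lipschitz on each of the two halves $[x_k-\lambda d_k,x_k]$ and $[x_k,x_k+\lambda d_k]$ with constant $M+1/\lambda$, so
\[
\|(f+\psi_K)(t) - (f+\psi_K)(x_k)\| \leq (M+1/\lambda)|t-x_k|\quad\text{for all }t\in I_k.
\]
Combining this with $(f+\psi_K)(x_k) = h_k$, the bound $1/\lambda < 1 < M$, and the definition of $R_k$ gives, for $t\in R_k$,
\[
\|(f+\psi_K)(t) - h_k\| < 2M\cdot\frac{cd_k}{4M} = \frac{cd_k}{2} < r_k,
\]
where the last inequality uses $r_k > cd_k$ from Lemma~\ref{vitalicover}. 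Hence $(f+\psi_K)(R_k) \subset B(h_k,r_k) = B_k \subset H\setminus E$, proving (1).

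There is no serious obstacle: the constant $Q = c/(4M)$ is engineered precisely so that $2M\cdot Q < c$, giving exactly the room needed for the image of $R_k$ to fit inside the porosity hole of radius $> cd_k$. The only mild point is that $\psi_K$ fails to be differentiable at the midpoint $x_k$ (and the endpoints of $I_k$), but this is dealt with by applying the mean value inequality separately on the two halves of $I_k$ rather than globally.
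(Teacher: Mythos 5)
Your proof is correct, and it reaches the same constant $Q=c/4M$ via the same underlying estimate (speed of $f+\psi_K$ at most $M+1/\lambda<2M$ against a hole of radius $r_k>cd_k$), but it organises the argument in the reverse direction from the paper. The paper defines $R_k$ implicitly as the connected component of $(f+\psi)^{-1}(B(h_k,r_k))$ containing $x_k$, truncated to $(x_k-\lambda d_k/2,\,x_k+\lambda d_k/2)$, so that property (1) is automatic, and then proves the length bound (2) by an escape-time argument: if the component is short, some $y$ near $x_k$ is mapped onto $\partial B(h_k,r_k)$, and $r_k>cd_k$ together with $\int_{x_k}^{y}\|f'+\psi'\|\leq 2M|R_k|$ forces $|R_k|\geq (Q/\lambda)|I_k|$. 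You instead fix $R_k$ to be the symmetric interval of exactly the required length and verify (1) directly by the Lipschitz bound; this trades the paper's case analysis for a one-line containment check and is arguably cleaner. Both choices serve equally well downstream, since the later lemmas use only the four stated properties of $R_k$ (indeed your $R_k$ has the mild bonus that $(f+\psi)(\mbox{Clos}(R_k))$ sits in the closed ball of radius $cd_k/2$, well inside $B_k$). The only point worth making explicit in your write-up is the one you already flag: the mean value inequality must be applied separately on $[x_k-\lambda d_k,x_k]$ and $[x_k,x_k+\lambda d_k]$ because $\psi_K$ is not differentiable at $x_k$.
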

\begin{proof} Let $I=I_{k}$ be one of the intervals $I_{1},\ldots,I_{K}$. Let the point $x\in[0,1]$, point $h\in{H}$ and a real number $d>0$ be given by the discussion in Definition~\ref{piecewisedef}. Put $B=B_{k}=B(h,r)$. From \eqref{varphi} and \eqref{varphi1} we have that
$(f+\psi)(x)=h$. Therefore, the set $U=(f+\psi)^{-1}(B)$ is a non-empty, (open) subset of $[0,1]$, containing the point $x$. Define an open subinterval $R=R_{k}$ of $I=I_{k}$ by
\begin{equation}\label{R}
R=U(x)\cap\left(x-\lambda d/2,x+\lambda d/2\right)\mbox{,}
\end{equation}
where $U(x)$ denotes the connected component of $U$ containing $x$. Parts 1, 3 and 4 of Lemma \ref{holeestimate} are now readily verified. To prove part 2, we distinguish between two cases. If $R=\left(x-\frac{\lambda{d}}{2},x+\frac{\lambda{d}}{2}\right)$ then using Lemma \ref{vitalicover}, part 2, $Q=c/4M$ and $\lambda>1$ we have
\begin{equation*}
\left|R\right|=\lambda{d}=\frac{1}{2}\left|I\right|\geq\frac{Q}{\lambda}\left|I\right|.
\end{equation*}
We can therefore assume that some point in the interval $\left(x-\frac{\lambda{d}}{2},x+\frac{\lambda{d}}{2}\right)$ is mapped
outside of $B(h,r)$ by $(f+\psi)$. In this case, there exists a point
$y\in\left(x-\lambda d/2,x+\lambda d/2\right)$ with $(f+\psi)(y)\in\partial{B(h,r)}$. Without loss of generality we assume $y>x$. Note that $\psi$ is differentiable on the open interval $(x,y)$. Hence,
\begin{align}
\int_{x}^{y}\left\|f'(t)+\psi'(t)\right\|dt&\geq\left\|(f+\psi)(y)-(f+\psi)(x)\right\|= \nonumber \\
&=\left\|(f+\psi)(y)-h\right\|=r>cd=\frac{c\left|I\right|}{2\lambda}. \label{integralbound1}
\end{align}
The interval $(x,y)$ is a subset of $R$. Consequently,
$\left|y-x\right|\leq\left|R\right|$. Using this fact, along with $\left\|f'\right\|\leq M$ and part 2 of Observation \ref{piecewise} we obtain
\begin{equation}
\int_{x}^{y}\left\|f'(t)+\psi'(t)\right\|dt\leq\int_{x}^{y}\left\|f'(t)\right\|dt+\int_{x}^{y}\left\|\psi'(t)\right\|dt\leq {M\left|R\right|+\frac{1}{\lambda}\left|R\right|}<2M\left|R\right|. \label{integralbound2}
\end{equation}
Together, \eqref{integralbound1}, \eqref{integralbound2} and $Q=c/4M$ imply that $\left|R\right|\geq\frac{Q}{\lambda}\left|I\right|$, as required.
\end{proof}
\begin{lemma}\label{smoothing}
Let $\psi=\psi(f,\lambda,\theta,M,\mathcal{I},K)$  and let the open intervals $R_{1},\ldots,R_{K}$ be given by the conclusion of Lemma \ref{holeestimate}. Suppose $\epsilon>0$. Then there exists a function $g=g(f,\psi,\epsilon):[0,1]\to{H}$ such that the following statements hold.
\begin{enumerate}
\item $g$ is continuously differentiable on $[0,1]$.
\item $\left\|g-f\right\|<\theta$, and $\left\|g'-f'\right\|\leq 2\sup\left\{\left\|\psi'(t)\right\|:\psi\mbox{ is differentiable at }t\right\}$.
\item There exists a closed set $T\subseteq\bigcup_{k=1}^{K}I_{k}$ such that $T$ is a finite union of closed intervals, $\left|T\right|<\epsilon$ and $t\in T$ whenever $g(t)\neq f(t)+\psi(t)$ or $\psi$ is not differentiable at $t$.
\item $g(t)=f(t)$ whenever $t\notin\displaystyle\bigcup_{k=1}^{K}I_{k}$.
\item For each $k$, there exists an open ball $B_{k}$ in $H$ with
$g(R_{k})\subset{B_{k}}\subset{H\setminus{E}}$.
\end{enumerate}
\end{lemma}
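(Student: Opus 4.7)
The plan is to obtain $g$ by locally smoothing $\psi$ near each of its finitely many corners and setting $g = f + \tilde\psi$. By Observation~\ref{piecewise}, $\psi$ fails to be differentiable only at the $3K$ points $a_k, x_k, b_k$ for $1 \leq k \leq K$. I pick a small $\delta > 0$ (to be chosen) and for each $k$ define the smoothing intervals $T_k^a = [a_k, a_k + \delta]$, $T_k^b = [b_k - \delta, b_k]$, and $T_k^x = [x_k - \delta, x_k + \delta]$. Let $T$ be the union of these $3K$ closed intervals; for $\delta$ small enough they are pairwise disjoint, all sit inside $\bigcup_k I_k$, and $|T| = 4K\delta$.

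On each component of $T$ I replace $\psi$ by a $C^1$ piece $\tilde\psi$ matching $\psi$ in value and derivative at the component's boundary, and leave $\tilde\psi = \psi$ off $T$. On $T_k^x$, the choice $\tilde\psi'(t) = -\alpha_k (t - x_k) p_k / \delta$ is continuous, equals $\pm\alpha_k p_k$ at the endpoints, and has integral zero, consistent with $\psi(x_k - \delta) = \psi(x_k + \delta) = (1 - \alpha_k \delta) p_k$, which holds by the symmetry of $\varphi_k$ about $x_k$. On $T_k^a$, I set $\tilde\psi'(t) = \mu(t) p_k$ for a continuous trapezoidal scalar function $\mu$ that rises from $0$, plateaus at a peak $A$, and descends to $\alpha_k$; the integral condition $\int_{T_k^a} \mu = \alpha_k \delta$, required so that $\tilde\psi(a_k + \delta) = \psi(a_k + \delta)$, is met by taking $A = (5/4)\alpha_k$ on three equal subintervals. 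The construction on $T_k^b$ is symmetric. Then $\tilde\psi$, and hence $g$, is $C^1$ on $[0,1]$; $\|\tilde\psi'\|_\infty \leq (5/4)\|\psi'\|_\infty < 2 \sup\|\psi'\|$; and $\|\tilde\psi - \psi\|_\infty = O(\delta)$.

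The delicate verification is condition~5: $g(R_k) \subset B_k$ despite the smoothing inside $R_k$ around $x_k$. Off $T_k^x$ we have $g = f + \psi$, which lies in $B_k$ by Lemma~\ref{holeestimate}. For $t \in R_k \cap T_k^x$,
\begin{equation*}
\|g(t) - (f+\psi)(x_k)\| \leq \|(f+\psi)(t) - (f+\psi)(x_k)\| + \|\tilde\psi(t) - \psi(t)\| \leq (M + \|\psi'\|_\infty)\delta + O(\delta),
\end{equation*}
and since $(f+\psi)(x_k)$ is the centre of $B_k$, whose radius $r_k > c d_k$ is uniformly bounded below over the finite family $I_1,\dots,I_K$, taking $\delta$ small forces $g(t) \in B_k$. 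It then remains to choose $\delta$ small enough that $4K\delta < \epsilon$, $\|\tilde\psi - \psi\|_\infty < \theta - \|\psi\|_\infty$ (positive by Observation~\ref{piecewise}), and $g(R_k) \subset B_k$ for every $k$; conditions~3 and~4 are then automatic since $T \subset \bigcup_k I_k$ is a disjoint union of $3K$ closed intervals containing all non-differentiability points of $\psi$, and $g$ is unchanged outside $\bigcup_k I_k$. The only real obstacle in the argument is condition~5: the smoothing at $x_k$ unavoidably shifts the curve within $R_k$, and one must verify that the shift is small compared with $r_k$; this is exactly where the uniform lower bound on the $r_k$ across the finite collection is used.
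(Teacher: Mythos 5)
Your construction is essentially the paper's: smooth $\psi'$ on small closed neighbourhoods of the $3K$ corner points $a_k,x_k,b_k$, preserving $\int\xi=\int\psi'$ on each such neighbourhood so that $g=f+\tilde\psi$ rejoins $f+\psi$ at the boundaries, and verify condition~5 by noting the perturbation near $x_k$ is $O(\delta)$ while the radii $r_k$ of the finitely many balls are bounded below. The only differences are cosmetic (a trapezoidal interpolant with peak $\tfrac54\alpha_k$ versus the paper's piecewise-affine one with peak $\tfrac32\alpha_k$, and a single $\delta$ in place of the paper's $\rho$ and $\zeta_k$), so the proposal is correct and follows the same route.
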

\begin{proof}
Write $I_{k}=[a_{k},b_{k}]$ for $k=1,\ldots,K$. For each $k$ we let $x_{k}$ denote the midpoint of the interval $I_{k}$. 
From Lemma~\ref{holeestimate}, part 3 we have that $x_{k}\in R_{k}$ and $(f+\psi)(x_{k})$ is the centre of an open ball $B_{k}\subset{H}$ with $(f+\psi)(R_{k})\subset{B_{k}}\subset{H\setminus{E}}$. For $1\leq{k}\leq{K}$ and $\delta>0$ we set
\begin{equation*}
J_{\delta}(a_{k})=[a_{k},a_{k}+\delta]\mbox{, }J_{\delta}(b_{k})=[b_{k}-\delta,b_{k}]\mbox{ and }J_{\delta}(x_{k})=[x_{k}-\delta,x_{k}+\delta].
\end{equation*}
Let $s(\psi)=\sup\left\{\left\|\psi'(t)\right\|\mbox{ : }\psi\mbox{ is differentiable at }t\right\}$.
Using Lemma \ref{holeestimate}, part 4 and Observation \ref{piecewise}, part 2 we may choose $\rho\in(0,\min\left\{\epsilon/12K,(\theta-\left\|\psi\right\|)/12Ks(\psi)\right\})$ small enough so that, for each $1\leq{k}\leq{K}$, the sets $J_{\rho}(a_{k}),J_{\rho}(b_{k}),R_{k}$ are pairwise disjoint subsets of the interval $I_{k}$. For each $x_{k}$, we may choose $r_{k}>0$ such that
\begin{equation*}
B_{k}=B((f+\psi)(x_{k}),r_{k})\subset{H\setminus{E}}.
\end{equation*}
Choose $0<\zeta_{k}<\min\left\{\rho,r_{k}/12s(\psi)\right\}$ sufficiently small so that $J_{\zeta_{k}}(x_{k})\subset{R_{k}}$ and $\left\|(f+\psi)(t)-(f+\psi)(x_{k})\right\|<r_{k}/2$ for all
$t\in{J_{\zeta_{k}}(x_{k})}$. Set  
\begin{equation*}
T=\bigcup_{k=1}^{K}\left(J_{\rho}(a_{k})\cup{J_{\rho}(b_{k})}\cup{J_{\zeta_{k}}(x_{k})}\right),
\end{equation*} 
and observe that $\left|T\right|=\rho{K}+\rho{K}+\sum_{k=1}^{K}2\zeta_{k}<\epsilon$. From Observation~\ref{piecewise}, part 1 we have that $\psi$ is continuously differentiable
on the set $[0,1]\setminus{T}$. Let $\xi:[0,1]\to{H}$ be a
continuous map with 
\begin{multline}\label{eq:xi}
\xi(t)=\psi'(t)\mbox{ for all }t\in[0,1]\setminus{T}\mbox{, }\left\|\xi\right\|\leq 2s(\psi)\mbox{ and }\\
\int_{J}\xi=\int_{J}\psi'\mbox{ whenever }J=J_{\rho}(a_{k}),J_{\rho}(b_{k}),J_{\zeta_{k}}(x_{k}). 
\end{multline}
We may define $\xi$ as follows: Set $\xi(t)=\psi'(t)$ for all $t\in[0,1]\setminus T$. Note that $\psi'$ restricted to any one of the intervals $(a_{k},a_{k}+\rho)$, $(b_{k}-\rho,b_{k})$, $[x_{k}-\zeta_{k},x_{k})$ and $(x_{k},x_{k}+\zeta_{k}]$ is constant. This follows from Observation~\ref{piecewise}, part 1 and the condition that the intervals $J_{\rho}(a_{k})$, $J_{\rho}(b_{k})$, $J_{\zeta_{k}}(x_{k})$ are pairwise disjoint subsets of $I_{k}$ for each $k$. Further, by \eqref{zero} we have $\psi'(t)=0$ whenever $t\notin \bigcup_{1\leq k\leq K}I_{k}$. On each interval $J_{\rho}(a_{k})$, define $\xi$ by stipulating that $\xi$ is affine on the intervals $[a_{k},a_{k}+\rho/2]$, $[a_{k}+\rho/2,a_{k}+\rho]$ and
\begin{equation*}
\xi(t)=\begin{cases} 
0=\psi'(t) & \mbox{ if }t=a_{k},\\
3\psi'(t)/2 & \mbox{ if }t=a_{k}+\rho/2,\\
\psi'(t) & \mbox{ if }t=a_{k}+\rho.
\end{cases}
\end{equation*}
$\xi$ may be defined on the intervals $J_{\rho}(b_{k})$ similarly. Next, using Observation~\ref{piecewise}, part 1, note that $\psi'(x_{k}-\zeta_{k})=-\psi'(x_{k}+\zeta_{k})$ and $\int_{J_{\zeta_{k}}(x_{k})}\psi'=0$. Thus, we may define $\xi$ on each interval $J_{\zeta_{k}}(x_{k})$ by stipulating that $\xi$ is affine on the interval $J_{\zeta_{k}}(x_{k})$ and $\xi(t)=\psi'(t)$ for $t=x_{k}\pm\zeta_{k}$.

We now define a curve $\eta$ by $\eta(t)=\int_{0}^{t}\xi(u)du\mbox{ for all }t\in[0,1]$.

Finally, we set $g=f+\eta$. Part 1 of the present lemma is now clear. Using \eqref{eq:xi}, we get that $\eta(t)=\int_{0}^{t}\psi'(u)du=\psi(t)-\psi(0)=\psi(t)$ whenever $t\in[0,1]\setminus T$; part 3 is now readily verified. Part 4 is an immediate consequence of part 3 and \eqref{zero}. Let us now verify part 2: We fix $t\in[0,1]$ and  observe that 
\begin{align*}
\left\|g(t)-(f+\psi)(t)\right\|&=\left\|\eta(t)-\psi(t)\right\|\\
&=\left\|\int_{0}^{t}\xi(u)du-\int_{0}^{t}\psi'(u)du\right\|\\
&\leq\int_{0}^{t}\left\|\xi(u)-\psi'(u)\right\|du\\
&\leq\int_{T}3s(\psi)du\leq \rho\cdot 4K\cdot 3s(\psi)<\theta-\left\|\psi\right\|
\end{align*}
In the above we use \eqref{eq:xi}, 
and the restrictions on $\rho$ and $\zeta_{k}$ given above. The inequality $\left\|g-f\right\|<\theta$ now follows, whilst $\left\|g'-f'\right\|\leq 2s(\psi)$ is readily verified using the definitions of $g$, $\eta$ and $\xi$.

Finally, we turn our attention to part 5. If $t\in R_{k}\setminus T$ then, by part 3 and Lemma~\ref{holeestimate}, part 1, we have that $g(t)=(f+\psi)(t)\in B_{k}$. Since the intervals $J_{\rho}(a_{k})$ and $J_{\rho}(b_{k})$ were chosen to be disjoint from $R_{k}$, it only remains to show that points $t\in J_{\zeta_{k}}(x_{k})$ satisfy $g(t)\in B_{k}$. Fix $1\leq k\leq K$ and $t\in J_{\zeta_{k}}(x_{k})$. We observe that
\begin{align*}
\left\|g(t)-(f+\psi)(x_{k})\right\|&\leq\left\|(f+\eta)(t)-(f+\psi)(t)\right\|+\left\|(f+\psi)(t)-(f+\psi)(x_{k})\right\|\\
&\leq\left\|\int_{0}^{t}\xi(u)du-\int_{0}^{t}\psi'(u)du\right\|+r_{k}/2\\
&\leq\int_{x_{k}-\zeta_{k}}^{t}\left\|\xi(u)-\psi'(u)\right\|du+r_{k}/2\\
&\leq 2\zeta_{k}\cdot 3s(\psi)+r_{k}/2<r_{k}.
\end{align*}
Hence $g(t)\in B((f+\psi)(x_{k}),r_{k})=B_{k}$. 
\end{proof}
The next Lemma reveals that by careful choice of our starting curve $f$, we gain some control over the measure of $E$ on curves in a neighbourhood of the new curve $g(f,\psi,\epsilon)$.
\begin{lemma}\label{banachmazur}
Suppose $M,\lambda>1$ and $\theta,\epsilon,\upsilon>0$ are real
numbers. Let $U$ be an open subset of $\Gamma_{1}(H)$ and $f\in{U}$ be a curve
satisfying $\left\|f'(t)\right\|\leq{M}$ for all $t\in[0,1]$ and
\begin{equation}\label{closetoworst}
\left|f^{-1}(E)\right|>\sup_{\gamma\in{U}}\left|\gamma^{-1}(E)\right|-\upsilon.
\end{equation}
Suppose $F$ is a finite union of closed intervals in $[0,1]$, let $\mathcal{I}=\left\{I_{k}\right\}_{k=1}^{\infty}$ be a collection of pairwise disjoint intervals from $\mathcal{V}\left(f,\lambda,\theta\right)$ such that $I_{k}\subseteq[0,1]\setminus F$ for each $k$ and let $L\geq 1$ satisfy
\begin{equation}\label{int2}
\left|\left(f^{-1}(E)\setminus{F}\right)\setminus\left(\bigcup_{k=1}^{\infty}I_{k}\right)\right|=0\mbox{ and }\sum_{k=L+1}^{\infty}\left|I_{k}\right|<\upsilon.
\end{equation}
Let the function $\psi=\psi(f,\lambda,\theta,M,\mathcal{I},L)$ and the open
intervals $R_{1},\ldots,R_{L}$ be given by Definition \ref{piecewisedef} and Lemma \ref{holeestimate} respectively. Let the curve
$g=g(f,\psi,\epsilon)$ be given by the conclusion of Lemma \ref{smoothing}.
Define subsets $S$ and $A$ of $[0,1]$ by
\begin{equation*}
S(g,f)=\bigcup_{k=1}^{L}R_{k}\mbox{ and }
A(g,f,F)=[0,1]\setminus\left(\bigcup_{k=1}^{L}I_{k}\cup F\right).
\end{equation*}
Then there exists $\delta>0$ such that for all curves $\gamma$ satisfying 
\begin{equation}\label{condition}
\sup_{t\in S\cup A}\left\|\gamma(t)-g(t)\right\|+\sup_{t\in S\cup A}\left\|\gamma'(t)-g'(t)\right\|\leq \delta,
\end{equation}
we have
\begin{equation*}
\left|\gamma^{-1}(E)\cap\mbox{Clos}(S)\right|<\upsilon\mbox{ and }\left|\gamma^{-1}(E)\cap\mbox{Clos}(A)\right|<5\upsilon.
\end{equation*}
\end{lemma}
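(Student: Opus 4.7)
My plan is to establish the two inequalities separately and take $\delta$ to be the minimum of the two thresholds produced. The bound on $\mbox{Clos}(S)$ is essentially a compactness/continuity estimate, whereas the bound on $\mbox{Clos}(A)$ is a comparison argument exploiting the near-maximality hypothesis \eqref{closetoworst}.

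\textbf{The bound on $\mbox{Clos}(S)$.} Since each $R_{k}$ is an open subinterval of $I_{k}$, for $1\leq k\leq L$ I would fix a compact subinterval $K_{k}\subset R_{k}$ with $\left|R_{k}\setminus K_{k}\right|<\upsilon/(2L)$. By part 5 of Lemma~\ref{smoothing}, $g(R_{k})\subset B_{k}\subset H\setminus E$, so the compact image $g(K_{k})$ lies at some positive distance $\eta_{k}$ from $H\setminus B_{k}$. Choosing $\delta$ below $\min_{k}\eta_{k}$, condition \eqref{condition} forces $\gamma(K_{k})\subset B_{k}$ for every $k$, and hence $\gamma^{-1}(E)\cap K_{k}=\emptyset$. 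The sets $\mbox{Clos}(R_{k})$ are pairwise disjoint (by part 4 of Lemma~\ref{holeestimate}) and differ from the $R_{k}$ only in finitely many endpoints, so summing gives $\left|\gamma^{-1}(E)\cap\mbox{Clos}(S)\right|\leq\sum_{k=1}^{L}\left|R_{k}\setminus K_{k}\right|<\upsilon/2$.

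\textbf{The bound on $\mbox{Clos}(A)$.} The key observation is that $g=f$ on $A$ by part 4 of Lemma~\ref{smoothing}, so condition \eqref{condition} makes $\gamma$ a $C^{1}$-perturbation of $f$ on $A$. To exploit \eqref{closetoworst} I would build a comparison curve in $U$ which inherits most of $\gamma^{-1}(E)\cap A$. The set $A$ is the complement in $[0,1]$ of a finite union of closed intervals, hence a finite disjoint union of open intervals; I fix, independently of $\delta$, a $C^{1}$ cut-off $\chi:[0,1]\to[0,1]$ supported in $A$ with $\left|A\setminus\left\{\chi=1\right\}\right|<\upsilon$ and a preselected bound on $\left\|\chi'\right\|$. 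Set $\tilde\gamma(t)=f(t)+\chi(t)(\gamma(t)-f(t))$. Then $\tilde\gamma=f$ off $A$, $\tilde\gamma=\gamma$ on $\left\{\chi=1\right\}$, and $\left\|\tilde\gamma-f\right\|+\left\|\tilde\gamma'-f'\right\|$ is bounded by a fixed multiple of $\delta$. Shrinking $\delta$ further places $\tilde\gamma\in U$, after which \eqref{closetoworst} gives $\left|\tilde\gamma^{-1}(E)\right|<\left|f^{-1}(E)\right|+\upsilon$. Since $\tilde\gamma$ and $f$ agree outside the Borel set $A$, this rearranges to $\left|\tilde\gamma^{-1}(E)\cap A\right|<\left|f^{-1}(E)\cap A\right|+\upsilon$. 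By \eqref{int2} the set $f^{-1}(E)\cap A$ lies, up to a null set, in $\bigcup_{k>L}I_{k}$, so $\left|f^{-1}(E)\cap A\right|<\upsilon$. Combining this with $\gamma=\tilde\gamma$ on $\left\{\chi=1\right\}$ and $\left|A\setminus\left\{\chi=1\right\}\right|<\upsilon$ delivers $\left|\gamma^{-1}(E)\cap A\right|<3\upsilon$, and the closure adds only a null set of endpoints.

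\textbf{Main obstacle.} The subtle point is not the bookkeeping but the ordering of quantifiers in the second step: the cut-off $\chi$ must be chosen \emph{before} $\delta$, so that the bound on $\left\|\chi'\right\|$ is a fixed constant against which $\delta$ can be calibrated to keep $\tilde\gamma$ in $U$. That this is possible hinges on $A$ having only finitely many components, which is precisely why the hypothesis restricts $F$ to a finite union of closed intervals and why only the first $L$ intervals of $\mathcal{I}$ are used; the slack between $3\upsilon$ and the stated $5\upsilon$ is comfortable enough to absorb any imprecision introduced in defining the cut-off.
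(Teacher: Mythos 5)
Your proof is correct, and your treatment of $\mbox{Clos}(S)$ (compact exhaustion $K_{k}\subset R_{k}$ plus the positive distance from the compact set $g(K_{k})$ to $H\setminus B_{k}$) is precisely the "easy application of Lemma~\ref{smoothing}, part 5" that the paper leaves to the reader. For $\mbox{Clos}(A)$, however, your route genuinely differs from the paper's. The paper argues by contradiction: it performs a \emph{second} Vitali extraction $\mathcal{I}'=\{I_{k}'\}$ covering $f^{-1}(E)\cap F$, and builds a competitor $\widetilde{\beta}$ equal to $f$ on $\bigcup_{k\leq L}I_{k}\cup\bigcup_{k\leq L'}I_{k}'$ and to $\beta$ elsewhere, glued by explicit cosine transitions on $\omega$-collars around the interval endpoints; the second extraction is needed precisely because $\widetilde{\beta}$ coincides with $\beta$ on most of $F$, where $\left|\beta^{-1}(E)\right|$ admits no lower bound, so the measure $\left|f^{-1}(E)\cap F\right|$ must be reinstated interval by interval. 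Your competitor $\tilde{\gamma}=f+\chi(\gamma-f)$ instead equals $f$ on \emph{all} of $[0,1]\setminus A$, which makes the identity $\left|\tilde{\gamma}^{-1}(E)\right|-\left|f^{-1}(E)\right|=\left|\tilde{\gamma}^{-1}(E)\cap A\right|-\left|f^{-1}(E)\cap A\right|$ exact (valid for outer measure since $A$ is measurable) and renders the second Vitali cover unnecessary; the gluing is absorbed into the single cutoff $\chi$, whose derivative bound is fixed before $\delta$ exactly as you insist, using that $A$ has finitely many components. The small points your sketch relies on all check out: $g=f$ and hence $g'=f'$ on the open set $A$ by Lemma~\ref{smoothing}, part 4, so \eqref{condition} controls $\gamma-f$ and $\gamma'-f'$ there; and \eqref{int2} gives $\left|f^{-1}(E)\cap A\right|\leq\sum_{k>L}\left|I_{k}\right|<\upsilon$. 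The outcome is a direct (rather than contradiction-based) argument with the sharper constant $3\upsilon$ in place of $5\upsilon$.
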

\begin{proof}  
We focus only on finding a suitable $\delta$ so that the second condition above holds. Choosing $\delta$ small enough for the first condition is an easy application of Lemma~\ref{smoothing}, part 5. Note that $\mbox{Clos}(A)\setminus A$ has measure zero. It is therefore enough to verify the second condition with $\mbox{Clos}(A)$ replaced by $A$. 

Using the Vitali Covering Theorem, we may extract a countable collection $\mathcal{I}'=\left\{I'_{k}\right\}_{k=1}^{\infty}$ of pairwise disjoint intervals from $\mathcal{V}(f,\lambda,\theta)$ and an integer $L'\geq 1$ such that $I_{l}'\cap I_{k}=\emptyset$ for all $l$ and $1\leq k \leq L$, and condition \eqref{int2} also holds when $(f^{-1}(E)\setminus F,I_{k},L)$ are replaced by $(f^{-1}(E)\cap F,I_{k}',L')$. 
For each $1\leq k\leq L$ and $1\leq l\leq L'$, write $I_{k}=[a_{k},b_{k}]$ and $I_{l}'=[a_{l}',b_{l}']$.
Next, choose $\omega\in(0,\upsilon/2(L+L'))$ small enough so that the intervals 
\begin{equation*}
\left\{[a_{k}-\omega,b_{k}+\omega],[a_{k}'-\omega,b_{k}'+\omega]\mbox{ : }1\leq{k}\leq{L},1\leq{k}\leq{L'}\right\}
\end{equation*}
are pairwise disjoint. Pick $\tau>0$ small enough so that $B(f,\tau)\subset{U}$ and choose $0<\delta<\omega\tau/(4\pi)$. Suppose for a contradiction that there exists a curve $\beta\in\Gamma_{1}$ such that \eqref{condition} holds with $\gamma=\beta$, and $\left|\beta^{-1}(E)\cap{A}\right|\geq 5\upsilon$. We may then define a cuve $\widetilde{\beta}:[0,1]\to{H}$ by
\begin{equation}\label{beta}
\widetilde{\beta}(t)=\begin{cases}
f(t) &\mbox{if }t\in{\displaystyle\bigcup_{k=1}^{L}{I_{k}}\cup\bigcup_{k=1}^{L'}I_{k}'}, \\
\frac{1}{2}[\cos\frac{\pi}{\omega}(t-a_{k}+\omega)+1]\left(\beta(t)-f(t)\right)+f(t)
&\mbox{if }t\in[a_{k}-\omega,a_{k}],1\leq{k}\leq{L}, \\
\frac{1}{2}[\cos\frac{\pi}{\omega}(t-b_{k})+1]\left(f(t)-\beta(t)\right)+\beta(t)
&\mbox{if }t\in[b_{k},b_{k}+\omega],1\leq{k}\leq{L}, \\
\frac{1}{2}[\cos\frac{\pi}{\omega}(t-a_{k}'+\omega)+1]\left(\beta(t)-f(t)\right)+f(t)
&\mbox{if }t\in[a_{k}'-\omega,a_{k}'],1\leq{k}\leq{L'}, \\
\frac{1}{2}[\cos\frac{\pi}{\omega}(t-b_{k}')+1]\left(f(t)-\beta(t)\right)+\beta(t)
&\mbox{if }t\in[b_{k}',b_{k}'+\omega],1\leq{k}\leq{L'}, \\
\beta(t) &\mbox{otherwise.}
\end{cases}
\end{equation}
It is readily verified, using Lemma \ref{smoothing} part 4 and the fact that $\gamma=\beta$ satisfies \eqref{condition}, that $\widetilde{\beta}$ is a $C^{1}$ curve with $\widetilde{\beta}\in{B(f,\tau)}\subseteq{U}$. We also note that $\widetilde{\beta}(t)=\beta(t)$ for all
$t\in{A\setminus\left(\displaystyle\bigcup_{k=1}^{L}[a_{k}-\omega,a_{k}]\cup[b_{k},b_{k}+\omega]\cup\bigcup_{k=1}^{L'}[a_{k}'-\omega,a_{k}]\cup[b_{k}',b_{k}'+\omega]\right)}$.
Combining this with \eqref{beta}, \eqref{int2} and \eqref{closetoworst} we deduce the following:
\begin{align*}
\left|\widetilde{\beta}^{-1}(E)\right|&\geq\sum_{k=1}^{L}\left|f^{-1}(E)\cap{I_{k}}\right|+\sum_{k=1}^{L'}\left|f^{-1}(E)\cap{I_{k}'}\right|+\left|\beta^{-1}(E)\cap{A}\right|-2(L+L')\omega \\
&>\left|f^{-1}(E)\setminus F\right|-\sum_{k=L+1}^{\infty}\left|I_{k}\right|+\left|f^{-1}(E)\cap F\right|-\sum_{k=L'+1}^{\infty}\left|I_{k}'\right|+5\upsilon-\upsilon\\
&>\left|f^{-1}(E)\right|-2\upsilon+5\upsilon-\upsilon=\left|f^{-1}(E)\right|+2\upsilon>\sup_{\gamma\in{U}}\left|\gamma^{-1}(E)\right|.
\end{align*}
This is incompatible with $\widetilde{\beta}\in U$.
\end{proof}
\section{Construction}
We begin this section by stating the crucial lemma in the proof of our main result. As in the previous section, the setting is a real Hilbert space $H$ in which we fix a $c$-porous set $E$. 
\begin{lemma}\label{mainlemma}
For any open ball $U$ in $\Gamma_{1}(H)$ there exists an open ball $V$ in $\Gamma_{1}(H)$ such that $V\subset{U}$ and $\sup_{f\in{V}}\left|f^{-1}(E)\right|<\frac{1}{2}\sup_{f\in{U}}\left|f^{-1}(E)\right|$.
\end{lemma}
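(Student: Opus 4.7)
The plan is to construct, by iterating Lemma~\ref{banachmazur} finitely many times, a sequence of curves $f_{0},\ldots,f_{N}$ inside $U$ such that the final curve $f_{N}$ sits well inside $U$ and every $\gamma\in\Gamma_{1}(H)$ close enough to $f_{N}$ satisfies $|\gamma^{-1}(E)|<\frac{1}{2}s$, where $s:=\sup_{f\in U}|f^{-1}(E)|$ (assumed positive, otherwise take $V=U$). I would fix a small $\upsilon>0$, pick a starting curve $f_{0}\in U$ with $|f_{0}^{-1}(E)|>s-\upsilon$ and $\|f_{0}'\|\le M$, and choose constants $\lambda>1$ and $N\in\mathbb{N}$ with $(1-Q/\lambda)^{N}<\tfrac{1}{4}$ and $2/\lambda$ very small, where $Q=c/(4M)$. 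The parameters $\theta_{i},\epsilon_{i}$ are tuned downward after each iteration. At step $i$, set $F_{i}=\bigcup_{j<i}\bigcup_{k=1}^{L_{j}}I_{k}^{(j)}$, a finite union of closed intervals (with $F_{0}=\emptyset$). Using Lemma~\ref{vitalicover} and the Vitali Covering Theorem, extract a pairwise disjoint family $\mathcal{I}^{(i)}=\{I_{k}^{(i)}\}\subset\mathcal{V}(f_{i},\lambda,\theta_{i})$ contained in $[0,1]\setminus F_{i}$ and covering $f_{i}^{-1}(E)\setminus F_{i}$ up to measure $\upsilon/N$, pick $L_{i}$ so that \eqref{int2} holds, and apply Lemma~\ref{banachmazur} to produce $f_{i+1}:=g(f_{i},\psi^{(i)},\epsilon_{i})$, the sets $S_{i}$ and $A_{i}$, and the tolerance $\delta_{i}$. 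Then choose $\theta_{i+1}<\min(\theta_{i},\delta_{i}/4)$ before proceeding.

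Restricting $I_{k}^{(i)}\subset[0,1]\setminus F_{i}$ forces the supports of the corrections $f_{i+1}-f_{i}$ to be pairwise disjoint across $i$. Combined with Observation~\ref{piecewise}(2) ($\|\psi^{(i)}\|<\theta_{i}$ and $\|\psi^{(i)}{}'\|\le 1/\lambda$) and Lemma~\ref{smoothing}(2), this yields $\|f_{N}-f_{0}\|_{\infty}\le\max_{i}\theta_{i}$ and $\|f_{N}'-f_{0}'\|_{\infty}\le 2/\lambda$, placing $f_{N}$ inside $U$. The zero-mean property $\int_{I_{k}^{(i)}}\psi^{(i)}{}'=0$ from Observation~\ref{piecewise}(1) gives the derivative differences $(f_{i}'-f_{0}')$ a martingale structure with respect to the filtration on $[0,1]$ generated by the successive partitions into the chosen intervals; Kolmogorov's inequality (Theorem~\ref{kolmogorov}) then bounds the measure of the exceptional set where $\|f_{N}'(t)-f_{0}'(t)\|$ exceeds a prescribed threshold, which is the fundamental tool allowing the algorithm to keep $f_{N}$ inside $U$ as $\lambda$ and $N$ are pushed.

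Finally I set $V=B(f_{N},\delta)$ with $\delta<\min_{i}\delta_{i}/2$. Using $f_{N}=f_{i+1}$ on $S_{i}\subset F_{i+1}$ (since later iterations modify only inside $[0,1]\setminus F_{j}\subset[0,1]\setminus F_{i+1}$) and on $A_{i}$ the bound $\|f_{N}-f_{i+1}\|_{\infty}\le\max_{j>i}\theta_{j}<\delta_{i}/4$, every $\gamma\in V$ satisfies the closeness hypothesis of Lemma~\ref{banachmazur} at each step $i$. Decomposing $[0,1]$ into $A_{N-1}$ together with the $\bigcup_{k}I_{k}^{(i)}$ for $0\le i<N$, Lemmas~\ref{banachmazur} and \ref{holeestimate}(2) give
\begin{equation*}
\bigl|\gamma^{-1}(E)\cap\bigcup\nolimits_{k}I_{k}^{(i)}\bigr|\le |\gamma^{-1}(E)\cap S_{i}|+\bigl|\bigcup\nolimits_{k}I_{k}^{(i)}\setminus S_{i}\bigr|\le \upsilon/N+(1-Q/\lambda)\sum\nolimits_{k}|I_{k}^{(i)}|,
\end{equation*}
while an inductive bound $\sum_{k}|I_{k}^{(i)}|\le |f_{i}^{-1}(E)\setminus F_{i}|+\upsilon/N$, combined with $|f_{i}^{-1}(E)\setminus F_{i}|<5\upsilon/N$ (obtained from Lemma~\ref{banachmazur} at step $i-1$ with $\gamma=f_{i}$), makes the resulting series telescope geometrically to $(1-Q/\lambda)^{N}s+O(\upsilon)<\tfrac{1}{2}s$, as required. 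The main obstacle throughout is the coordinated bookkeeping of the parameter sequence $(\theta_{i},\epsilon_{i},\delta_{i},L_{i})$ across the $N$ iterations, together with the martingale/Kolmogorov control, which is precisely what prevents the cumulative perturbation from pushing $f_{N}$ out of $U$ as $N$ grows.
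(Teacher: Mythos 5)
Your high-level architecture (iterate Lemma~\ref{banachmazur} $N$ times with $(1-Q/\lambda)^{N}<\tfrac14$, take $V$ to be a small ball around the final curve) matches the paper, but two of your structural choices break the argument. First, you define $F_{i}=\bigcup_{j<i}\bigcup_{k}I_{k}^{(j)}$ and require $I_{k}^{(i)}\subset[0,1]\setminus F_{i}$, so the corrections have pairwise disjoint supports. With that choice, step $i\geq 1$ only treats the part of $f_{i}^{-1}(E)$ lying \emph{outside} all previously chosen intervals, which by Lemma~\ref{banachmazur} already has measure $O(\upsilon)$; nothing ever reduces the measure of $E$ inside the step-$0$ intervals below $(1-Q/\lambda)\sum_{k}|I_{k}^{(0)}|\approx(1-Q/\lambda)s$. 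Your sum therefore does not telescope to $(1-Q/\lambda)^{N}s$ but stalls at $(1-Q/\lambda)s+O(\upsilon)$, and since $\lambda$ must be large (you need $1/\lambda$ small compared to the radius of $U$ to keep the curves inside $U$), a single factor $(1-Q/\lambda)$ is nowhere near $\tfrac12$. The paper's $F_{n}$ deliberately does \emph{not} absorb the intervals $I_{k}^{(n)}$: the next round's Vitali cover re-enters $I_{k}^{(n)}\setminus R_{k}^{(n)}$ (see \eqref{partition}), which is exactly what produces the $N$-fold product. This is also why your martingale paragraph is inconsistent: with disjoint supports Kolmogorov's inequality is superfluous and $\|f_{N}'-f_{0}'\|\leq 2/\lambda$ is trivial; with re-entrant supports (which you need) the perturbations accumulate, $\|f_{N}'-f_{0}'\|$ can a priori be of order $N/\lambda$, and the paper must both run the martingale estimate on $X_{n}=\sum_{i<n}\phi_{i}'$ \emph{and} discard the intervals $C_{n}$ where the accumulated drift exceeds $\sigma/4$ (Algorithm~\ref{algorithm}, step 4; Lemma~\ref{stopsetlemma}), a mechanism absent from your proposal.

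Second, you set $f_{i+1}:=g(f_{i},\psi^{(i)},\epsilon_{i})$ and then invoke Lemma~\ref{banachmazur} at step $i+1$. But that lemma's hypothesis \eqref{closetoworst} requires the base curve to nearly maximize $|\gamma^{-1}(E)|$ over an open set containing a ball around it; the smoothed curve $g_{i}$ has no reason to have this property, so the contradiction argument (splicing a hypothetical bad $\beta$ into $f$ to beat the supremum) is unavailable from step $1$ onward, and your bounds $|f_{i}^{-1}(E)\setminus F_{i}|<5\upsilon/N$ and $|\gamma^{-1}(E)\cap A_{i}|<5\upsilon/N$ are unjustified for $i\geq 1$. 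The paper handles this by inserting an extra selection at each step: after constructing $g_{n}$ it chooses a fresh near-maximizer $f_{n+1}$ inside the small ball $U_{n}=B(g_{n},\delta_{n})$ (Algorithm~\ref{algorithm}, step 10), and the nesting $U_{n+1}\subset U_{n}$ with $\delta_{n+1}\leq\delta_{n}/2$ is what lets the conclusion of Lemma~\ref{banachmazur} at every stage apply simultaneously to all $\gamma\in V=U_{N}$ (Lemma~\ref{measure}). You would need to add both ingredients for the proof to go through.
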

The majority of this section will be devoted to proving Lemma \ref{mainlemma}. Fix
\begin{equation}\label{eq:epsilon}
0<\epsilon<\frac{1}{72}\sup_{f\in{U}}\left|f^{-1}(E)\right|.
\end{equation}
We begin the construction by setting $U_{0}=U$, $F_{0}=C_{0}=\emptyset$, $\delta_{0}=1$ and choosing a curve $f_{1}\in U$ satisfying
\begin{equation}\label{worstpossible}
\left|{f_{1}}^{-1}(E)\right|>\sup_{f\in{U}}\left|f^{-1}(E)\right|-\frac{\epsilon}{2}.
\end{equation}
Next choose $\sigma>0$ such that $B(f_{1},\sigma)\subseteq U$
and set $M=\left\|f_{1}'\right\|+\sigma$. We recall that $B(f_{1},\sigma)$ denotes the open ball in $\Gamma_{1}(H)$ with centre $f_{1}$ and radius $\sigma$ with respect to the $\Gamma_{1}(H)$ norm, defined in Notation~\ref{notation}.

 Fix a positive real number $\lambda>12/\sigma$ large enough so that
\begin{align}\label{lambda}
&(\epsilon\lambda^{2}\left(\sigma/8-1/\lambda\right)^{2}-1)\log(1-Q/\lambda)<-\log{4},\\
&\mbox{ and set }N=\left[\lambda^{2}\left(\frac{\sigma}{8}-\frac{1}{\lambda}\right)^{2}\epsilon\right],\label{eq:Ndef}
\end{align}
where $[-]:\mathbb{R}\to\mathbb{Z}$ denotes the integer-part function.
\begin{algorithm}\label{algorithm}
Let $n\geq{1}$.
\begin{enumerate}
\item Choose $\theta_{n}>0$ small enough so that $\left\|f_{n}-f_{1}\right\|+\theta_{n}<\sigma/4$. Let $\mathcal{V}_{n}$ be the collection of closed intervals consisting of
all those intervals from $\mathcal{V}(f_{n},\lambda,\theta_{n})$ which are
contained in the open set $[0,1]\setminus{\bigcup_{i=0}^{n-1}F_{i}}$. Note that
$\mathcal{V}_{n}$ is a Vitali cover of the set $B_{n}=f_{n}^{-1}(E)\setminus{\bigcup_{i=0}^{n-1}F_{i}}$.
\item Let $\mathcal{I}_{n}=\left\{{I_{k}}^{(n)}\right\}_{k=1}^{\infty}$ be a countable collection of pairwise
disjoint intervals ${I_{k}}^{(n)}$ belonging to $\mathcal{V}_{n}$, such that $\left|B_{n}\setminus\left(\bigcup_{k=1}^{\infty}{I_{k}}^{(n)}\right)\right|=0$
and $\sum_{k=1}^{\infty}\left|{I_{k}}^{(n)}\right|<\left|B_{n}\right|+\frac{\epsilon}{2^{n}}$. Choose an integer $K_{n}\geq{1}$ such that \newline $\sum_{k=K_{n}+1}^{\infty}\left|{I_{k}}^{(n)}\right|<\epsilon/2^{n}$.

\item Let $\widetilde{\phi}_{n}=\psi(f_{n},\lambda,\theta_{n},M,\mathcal{I}_{n},K_{n})$ be the piecewise-linear, continuous curve given by Definiton \ref{piecewisedef}.
\item Let $C_{n}$ be the union of all those intervals $I$ amoungst
${I_{1}}^{(n)},\ldots,{I_{K_{n}}}^{(n)}$ with $\left\|f_{n}'(t)+\widetilde{\phi}_{n}'(t)-f_{1}'(t)\right\|\geq\sigma/4$
for some $t\in{I}$. If $C_{n}\neq\emptyset$, choose an integer $L_{n}$ with
$0\leq{L_{n}}\leq{K_{n}}$ such that, after relabelling, $C_{n}=\bigcup_{k=L_{n}+1}^{K_{n}}{I_{k}}^{(n)}$. If $C_{n}=\emptyset$ then put $L_{n}=K_{n}$.
\item Set $\phi_{n}=\psi(f_{n},\lambda,\theta_{n},M,\mathcal{I}_{n},L_{n})$.
\item For each interval ${I_{k}}^{(n)}$ with $1\leq{k}\leq{L_{n}}$, let ${R_{k}}^{(n)}$ denote the open subinterval given by the conclusion of Lemma \ref{holeestimate}. 
\item Let $g_{n}=g(f_{n},\phi_{n},\epsilon/2^{n})$ be the $C^{1}$ curve given by the conclusion of Lemma~\ref{smoothing} and let $T_{n}\subseteq\bigcup_{k=1}^{L_{n}}{I_{k}}^{(n)}$ be a finite union of closed intervals such that $\left|T_{n}\right|\leq\epsilon/2^{n}$ and $t\in T_{n}$ whenever $g_{n}(t)\neq f_{n}(t)+\phi_{n}(t)$ or $\phi_{n}$ is not differentiable at $t$. Set $S_{n}=S(g_{n},f_{n})$ and $A_{n}=A\left(g_{n},f_{n},C_{n}\cup\bigcup_{i=0}^{n-1}F_{i}\right)$.
\item Set $F_{n}=\mbox{Clos}(S_{n})\cup\mbox{Clos}(A_{n})\cup{T_{n}}\cup{C_{n}}$.
\item Pick $\delta_{n}\in(0,\min(\delta_{n-1}/2,\sigma/(2^{n+3})))$ such that 
\begin{itemize}
\item $f\in{B(f_{1},\sigma)}$ and $\left\|f-f_{1}\right\|<\sigma/4$ for all curves $f\in{B(g_{n},\delta_{n})}$,
\item $\left|f^{-1}(E)\cap{F_{n}}\right|\leq\left|C_{n}\right|+7\epsilon/2^{n}$ whenever \eqref{condition} is satisfied with \newline $(\gamma,g,S,A,\delta)=(f,g_{n},S_{n},A_{n},2\delta_{n})$ 
\end{itemize}
Set $U_{n}=B(g_{n},\delta_{n})$.
\item Choose $f_{n+1}\in{U_{n}}$ satisfying $\left|{f_{n+1}}^{-1}(E)\right|>\sup_{f\in{U_{n}}}\left|f^{-1}(E)\right|-\epsilon/2^{n+1}$.
\end{enumerate}
\end{algorithm}
It is clear that we may choose $\theta_{1}$ as in step 1. For $n> 1$, we can choose $\theta_{n}$ as in step 1 because, by steps 9 and 10, the curve $f_{n}$ satisfies $\left\|f_{n}-f_{1}\right\|<\sigma/4$. Further, steps 9 and 10 guarantee that each $f_{n}$ lies inside $B(f_{1},\sigma)$. In particular, we have $\left\|f_{n}'\right\|\leq\left\|f_{1}'\right\|+\sigma=M$ for all $n$; we require this condition in order to define $\widetilde{\phi_{n}}$ according to Definition~\ref{piecewisedef} at step 3.

The Vitali Covering Theorem allows us to choose closed intervals
${I_{k}}^{(n)}$ as in step 2. We observe that
\begin{equation}\label{totalintervalsum}
\sum_{k=1}^{\infty}\left|{I_{k}}^{(n)}\right|\leq\left|B_{n}\right|+\frac{\epsilon}{2^{n}}=\left|{f_{n}}^{-1}(E)\right|-\left|f_{n}^{-1}(E)\cap\bigcup_{i=0}^{n-1}F_{i}\right|+\frac{\epsilon}{2^{n}}.
\end{equation}
By Lemma \ref{holeestimate} the intervals ${R_{k}}^{(n)}\subset{I_{k}}^{(n)}$, determined at step 6, satisfy 
\begin{equation}\label{Q/lambda}
\left|{R_{k}}^{(n)}\right|\geq\frac{Q}{\lambda}\left|{I_{k}}^{(n)}\right|,
\end{equation}
where $Q=c/4M$. Lemma \ref{smoothing} allows us to choose $T_{n}$ as in step 7. Further, step 7 and Lemma \ref{smoothing} imply $\left\|g_{n}-f_{n}\right\|<\theta_{n}$. Therefore, by our choice of $\theta_{n}$ in step 1 of Algorithm \ref{algorithm} we have that $\left\|g_{n}-f_{1}\right\|<\sigma/4$. For almost all $t\in[0,1]$ (i.e. all $t$ where $\phi_{n}$ is differentiable), we have
\begin{align*}
\left\|g_{n}'(t)-f_{1}'(t)\right\|&\leq\left\|g_{n}'(t)-f_{n}'(t)\right\|+\left\|\phi_{n}'(t)\right\|+\left\|f_{n}'(t)+\phi_{n}'(t)-f_{1}'(t)\right\|\\
&\leq 3\sup\left\{\left\|\phi_{n}'(s)\right\|\mbox{ : }\phi_{n}\mbox{ is differentiable at }s\right\}+\sigma/4\\
&\leq 3/\lambda+\sigma/4<\sigma/2,
\end{align*}
using Lemma~\ref{smoothing}, part 2, Observation~\ref{piecewise}, parts 2--3, Algorithm~\ref{algorithm}, steps 3--5 and $\lambda>12/\sigma$. Hence, $\left\|g_{n}'-f_{1}'\right\|\leq\sigma/2$ and $\left\|g_{n}-f_{1}\right\|<\sigma/4$; we deduce that $g_{n}\in{B(f_{1},\sigma)}\subseteq{U}$.

Note that each of the sets in the union on the right hand side of the equation for $F_{n}$ in step 8 are finite unions of closed intervals. Hence each $F_{n}$ is a finite union of closed intervals. This is important for step 9, as it allows us to apply Lemma~\ref{banachmazur} with the set $F=C_{n}\cup\bigcup_{i=0}^{n-1}F_{i}$.

To pick $\delta_{n}$ as in 9, we use $\left\|g_{n}-f_{1}\right\|<\sigma/4$, $g_{n}\in B(f_{1},\sigma)$ and apply Lemma \ref{banachmazur} with $\theta=\theta_{n}$, $\epsilon,\upsilon=\epsilon/2^{n}$, $U=U_{n-1}$, $f=f_{n}$, $F=C_{n}\cup\bigcup_{i=1}^{n-1}F_{i}$, $\mathcal{I}=\left\{I_{k}\right\}_{k=1}^{\infty}$ where
\begin{equation}
I_{k}=\begin{cases} {I_{k}}^{(n)} & \mbox{ if }k\leq L_{n},\\
I_{K_{n}-L_{n}+k}^{(n)} & \mbox{ if }k>L_{n},\end{cases}
\end{equation}
$L=L_{n}$, $\psi=\phi_{n}$, $R_{k}={R_{k}}^{(n)}$, $g=g_{n}$, $S=S_{n}$ and $A=A_{n}$. We note that condition~\eqref{int2} of Lemma~\ref{banachmazur} is satisfied using Algorithm~\ref{algorithm}, steps~2 and 4.

By construction, the sets $\left\{C_{i}\right\}_{i=1}^{n}$ are pairwise disjoint, and the sets $\left\{F_{i}\right\}_{i=1}^{n}$ are pairwise disjoint up until a set of measure zero. In particular we have
\begin{equation}\label{Cndisjoint}
\left|\bigcup_{i=0}^{n}C_{i}\right|=\sum_{i=0}^{n}\left|C_{i}\right|\mbox{ and }\left|f^{-1}(E)\cap\bigcup_{i=0}^{n}F_{i}\right|=\sum_{i=0}^{n}\left|f^{-1}(E)\cap{F_{i}}\right|
\end{equation}
for all $n\geq{0}$ and all curves $f\in{\Gamma_{1}(H)}$. 

Finally we emphasise that the interval $[0,1]$ can be decomposed as below. This fact is readily verified from Algorithm~\ref{algorithm}, steps 7--8 and Lemma~\ref{banachmazur}. 
\begin{equation}\label{partition}
[0,1]=\bigcup_{i=1}^{n}F_{i}\cup\bigcup_{k=1}^{L_{n}}\left({I_{k}}^{(n)}\setminus{{R_{k}}^{(n)}}\right)
\end{equation}
\begin{observation}\label{constantobservation}
Suppose $1\leq{p}<n$. Note that the points where $\phi_{p}$ is not
differentiable are contained in the set $F_{n-1}$. This follows from steps
7 and 8 of Algorithm \ref{algorithm}. Hence, the piecewise linear map
$\phi_{p}$ coincides with an affine map when restricted to any connected
component of the set $[0,1]\setminus{F_{n-1}}$. Moreover, by step 1 of
Algorithm \ref{algorithm} we have that the intervals ${I_{k}}^{(n)}$ are
connected subsets of $[0,1]\setminus{F_{n-1}}$. Therefore, we may conclude
that $\phi_{p}$ coincides with an affine map when restricted to each of the
intervals ${I_{k}}^{(n)}$. In particular, $\phi_{p}$ is differentiable on
each interval ${I_{k}}^{(n)}$ and $\phi_{p}'$ is constant when restricted
to each interval ${I_{k}}^{(n)}$.
\end{observation}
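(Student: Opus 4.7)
My plan is to trace the non-differentiability points of $\phi_p$ through the algorithm and show that they are all absorbed into the sets $F_i$ with $i<n$, whereas the intervals ${I_k}^{(n)}$ chosen at the $n$th iteration are by construction disjoint from these same sets.

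First I would apply Observation~\ref{piecewise}(1) to $\phi_p=\psi(f_p,\lambda,\theta_p,M,\mathcal{I}_p,L_p)$ to identify the points at which $\phi_p$ is not differentiable as exactly the endpoints and midpoints of the intervals ${I_k}^{(p)}$ with $1\leq k\leq L_p$. By step~7 of Algorithm~\ref{algorithm}, every such point lies in $T_p$, and step~8 gives the inclusion $T_p\subseteq F_p$. Since $p<n$, it follows that $F_p\subseteq\bigcup_{i=0}^{n-1}F_i$, and hence every non-differentiability point of $\phi_p$ belongs to this cumulative union (which I take to be the correct reading of the symbol $F_{n-1}$ in the statement).

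The second step is to note that, by step~1 of Algorithm~\ref{algorithm}, the collection $\mathcal{V}_n$ is defined to consist only of intervals contained in the open set $[0,1]\setminus\bigcup_{i=0}^{n-1}F_i$, and by step~2 the intervals ${I_k}^{(n)}$ are selected from $\mathcal{V}_n$. Thus each ${I_k}^{(n)}$ is a connected subinterval of $[0,1]\setminus\bigcup_{i=0}^{n-1}F_i$ and, by the previous paragraph, is disjoint from every point of non-differentiability of $\phi_p$. Since $\phi_p$ is piecewise linear and continuous on $[0,1]$, any subinterval on which it is differentiable throughout must be one on which it coincides with an affine map; this yields at once that $\phi_p$ restricted to ${I_k}^{(n)}$ is affine and that $\phi_p'$ is well-defined and constant on ${I_k}^{(n)}$.

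I do not anticipate any genuine obstacle here; the observation is a bookkeeping assembly of the defining properties of $T_p$ and $F_p$ (steps~7--8) together with the construction of $\mathcal{V}_n$ (step~1). The only minor subtlety worth flagging is the interpretation of $F_{n-1}$ as shorthand for the cumulative union $\bigcup_{i=0}^{n-1}F_i$, since the literal reading as the single set $F_{n-1}$ is not what the subsequent chain of reasoning requires.
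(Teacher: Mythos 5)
Your argument is correct and is essentially the paper's own: the non-differentiability points of $\phi_{p}$ are the endpoints and midpoints of the ${I_{k}}^{(p)}$ with $k\leq L_{p}$, which lie in $T_{p}\subseteq F_{p}$ by steps 7--8, while step 1 forces each ${I_{k}}^{(n)}$ into $[0,1]\setminus\bigcup_{i=0}^{n-1}F_{i}$, so the piecewise linear $\phi_{p}$ is affine on each such interval. Your reading of $F_{n-1}$ as the cumulative union $\bigcup_{i=0}^{n-1}F_{i}$ is indeed the one the argument requires, since the sets $F_{i}$ are essentially pairwise disjoint and $F_{p}\not\subseteq F_{n-1}$ for $p<n-1$.
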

We now establish an upper bound for the measure of $E$ on any curve in the open ball $U_{n}$.
\begin{lemma}\label{measure}
For $n\geq 0$ and $f\in U_{n}$, we have
\begin{enumerate}[(i)]
\item $\left|f^{-1}(E)\cap F_{m}\right|\leq\left|C_{m}\right|+7\epsilon/2^{m}$ whenever $0\leq m\leq n$.
\item $\left|f^{-1}(E)\right|\leq\left(1-\frac{Q}{\lambda}\right)^{n}\left|f_{1}^{-1}(E)\right|+\sum_{i=0}^{n}\left|C_{n}\right|+8\sum_{i=0}^{n}\frac{\epsilon}{2^{i}}$.
\end{enumerate}
\end{lemma}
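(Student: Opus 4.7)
My plan is to prove part~(i) first, then deduce part~(ii) by induction on $n$, using (i) together with an auxiliary recursion on the ``new material'' $|B_n|$ that is introduced at each stage.

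For part~(i), the strategy is to verify that every $f \in U_n$ meets the closeness condition~\eqref{condition} of Lemma~\ref{banachmazur} with the specific data $(\gamma, g, S, A, \delta) = (f, g_m, S_m, A_m, 2\delta_m)$, so that the second bullet of Algorithm~\ref{algorithm}, step~9 delivers the bound $|f^{-1}(E) \cap F_m| \leq |C_m| + 7\epsilon/2^m$ directly. The key geometric observation is that $S_m \cup A_m \subseteq F_m$, while for each $j > m$ the intervals $\{I_k^{(j)}\}_k$ chosen at step~1 lie in $[0,1] \setminus \bigcup_{i=0}^{j-1} F_i$, hence are disjoint from $F_m$. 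Combined with Lemma~\ref{smoothing}, part~4 and the $C^1$ regularity of $g_j$ on an open neighbourhood of $S_m \cup A_m$, this yields $g_j = f_j$ and $g_j' = f_j'$ throughout $S_m \cup A_m$. Since $f_j \in U_{j-1} = B(g_{j-1}, \delta_{j-1})$, we then obtain $\sup_{t\in S_m \cup A_m}\|g_j(t) - g_{j-1}(t)\| + \sup_{t\in S_m \cup A_m}\|g_j'(t) - g_{j-1}'(t)\| < \delta_{j-1}$ for each $m < j \leq n$. Summing these and adding $\|f - g_n\|_{\Gamma_1(H)} < \delta_n$, while using the geometric decay $\delta_i \leq \delta_m/2^{i-m}$ from step~9, produces a total strictly less than $2\delta_m$, which is condition~\eqref{condition}.

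For part~(ii), the base case $n = 0$ follows at once from \eqref{worstpossible}: any $f \in U_0 = U$ satisfies $|f^{-1}(E)| < |f_1^{-1}(E)| + \epsilon/2 \leq |f_1^{-1}(E)| + 8\epsilon$. For the inductive step ($n \geq 1$), I first establish the auxiliary recursion $|B_n| \leq (1-Q/\lambda)(|B_{n-1}| + \epsilon/2^{n-1})$: indeed, \eqref{partition} applied at level $n-1$ forces $B_n$ to lie, up to a null set, inside $\bigcup_{k=1}^{L_{n-1}}(I_k^{(n-1)} \setminus R_k^{(n-1)})$, and then Lemma~\ref{holeestimate}, part~2 together with $\sum_k |I_k^{(n-1)}| \leq |B_{n-1}| + \epsilon/2^{n-1}$ from step~2 yields the recursion. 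Iterating from $|B_1| = |f_1^{-1}(E)|$ gives $|B_n| \leq (1-Q/\lambda)^{n-1}|f_1^{-1}(E)| + \sum_{i=1}^{n-1}\epsilon/2^i$. Now, for $f \in U_n$, partition~\eqref{partition} decomposes $f^{-1}(E)$ into pieces $f^{-1}(E) \cap F_i$ for $1 \leq i \leq n$ and $f^{-1}(E) \cap \bigcup_{k=1}^{L_n}(I_k^{(n)} \setminus R_k^{(n)})$; part~(i) bounds the first by $\sum_{i=1}^n(|C_i|+7\epsilon/2^i)$, while Lemma~\ref{holeestimate}, part~2 and step~2 bound the second by $(1-Q/\lambda)(|B_n|+\epsilon/2^n)$. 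Substituting the auxiliary estimate, recombining the error terms via $\sum_{i=1}^{n-1}\epsilon/2^i + \epsilon/2^n + 7\sum_{i=1}^n\epsilon/2^i = 8\sum_{i=1}^n\epsilon/2^i$, and using $|C_0| = 0$ produces the claimed inequality (noting $8\sum_{i=1}^n\epsilon/2^i \leq 8\sum_{i=0}^n\epsilon/2^i$).

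The main technical obstacle is part~(i): we must check condition~\eqref{condition} not only for $g_n$ (which is immediate since $f \in U_n = B(g_n,\delta_n)$), but for every earlier $g_m$, simultaneously exploiting the disjointness structure from step~1 (so that all later curves $g_j$ collapse to $f_j$ on $S_m \cup A_m$) and the rapid geometric decay of the $\delta_i$ (so the telescoped detours accumulate to less than $2\delta_m$). Once part~(i) is in hand, part~(ii) is a straightforward bookkeeping exercise.
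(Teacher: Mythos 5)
Your proposal is correct and follows essentially the same route as the paper: part (i) is verified exactly as in the paper (the intervals $I_k^{(j)}$ with $j>m$ miss $S_m\cup A_m$, so $g_j=f_j$ and $g_j'=f_j'$ there, and the telescoped sum $\sum_{i=m}^{n}\delta_i\leq 2\delta_m$ yields condition \eqref{condition} for $(f,g_m,S_m,A_m,2\delta_m)$), and part (ii) rests on the same ingredients, namely the decomposition \eqref{partition}, the estimate $|I_k|-|R_k|\leq(1-Q/\lambda)|I_k|$ from Lemma \ref{holeestimate}, the interval-sum bound from step 2, and part (i). The only cosmetic difference is that you run the induction on the auxiliary quantity $|B_n|$ via the recursion $|B_n|\leq(1-Q/\lambda)(|B_{n-1}|+\epsilon/2^{n-1})$, whereas the paper inducts on the inequality $|f^{-1}(E)|\leq(1-Q/\lambda)^{n}|f_1^{-1}(E)|+\sum_i|f^{-1}(E)\cap F_i|+\sum_i\epsilon/2^i$ over all $f\in U_n$ and extracts the bound on $\sum_k|I_k^{(j)}|=|B_j|+O(\epsilon/2^j)$ by applying the hypothesis to $f_j\in U_{j-1}$; these are interderivable via \eqref{totalintervalsum}.
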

\begin{proof}
For part (i), we note that the case $m=n$ is immediate from Algorithm~\ref{algorithm} step 9. The case $m=0$ is trivial because $F_{0}=C_{0}=\emptyset$. Suppose now that $1\leq m<n$. From Algorithm~\ref{algorithm}, we have that each interval ${I_{k}}^{(j)}$ with $1\leq k\leq L_{j}$ and $j>m$, does not intersect the set $S_{m}\cup A_{m}$. Therefore, using Algorithm~\ref{algorithm}, step~7 and Lemma~\ref{smoothing}, part~4 we have that $(g_{j}(t),g_{j}'(t))=(f_{j}(t),f_{j}'(t))$ for all $t\in S_{m}\cup A_{m}$ and $m+1\leq j\leq n$. We infer that for all $t\in S_{m}\cup A_{m}$,
\begin{multline}
\left\|f(t)-g_{m}(t)\right\|+\left\|f'(t)-g_{m}'(t)\right\|\leq\left\|f(t)-g_{n}(t)\right\|+\left\|f'(t)-g_{n}'(t)\right\|\\
+\sum_{j=m}^{n-1}(\left\|f_{j+1}(t)-g_{j}(t)\right\|+\left\|f_{j+1}'(t)-g_{j}'(t)\right\|)\leq\sum_{i=m}^{n}\delta_{i}\leq 2\delta_{m},
\end{multline}
using $\delta_{i+1}\leq\delta_{i}/2$ for the final inequality. Hence, \eqref{condition} is satisfied for $(\gamma,g,S,A,\delta)=(f,g_{m},S_{m},A_{m},2\delta_{m})$. Part (i) now follows from the conditions imposed by Algorithm~\ref{algorithm}, step 9.

Focusing now on part (ii), we will prove that
\begin{equation}\label{bound2}
\left|f^{-1}(E)\right|\leq\left(1-\frac{Q}{\lambda}\right)^{n}\left|f_{1}^{-1}(E)\right|+\sum_{i=0}^{n}\left|f^{-1}(E)\cap F_{i}\right|+\sum_{i=0}^{n}\frac{\epsilon}{2^{i}}\qquad \forall f\in U_{n}.
\end{equation}
Once this is established, part (ii) of the present Lemma follows from part (i).

For $n=0$, \eqref{bound2} is immediate from \eqref{worstpossible} and
$U_{0}=U$. Suppose $1\leq{j}\leq N$ and that \eqref{bound2}
holds for $n=j-1$. From \eqref{Q/lambda} we have that
\begin{equation*}\label{EonIk}
\left|{I_{k}}^{(j)}\right|-\left|{R_{k}}^{(j)}\right|\leq\left(1-\frac{Q}{\lambda}\right)\left|{I_{k}}^{(j)}\right|\mbox{ for }1\leq{k}\leq{L_{j}}
\end{equation*}
We can now deduce the following sequence of inequalities, using \eqref{totalintervalsum} and applying the induction hypothesis to the curve $f_{j}\in{U_{j-1}}$. 
\begin{align*}
\sum_{k=1}^{L_{j}}\left(\left|{I_{k}}^{(j)}\right|-\left|{R_{k}}^{(j)}\right|\right)&\leq\left(1-\frac{Q}{\lambda}\right)\sum_{k=1}^{L_{j}}\left|{I_{k}}^{(j)}\right| \nonumber \\
&\leq\left(1-\frac{Q}{\lambda}\right)\left(\left|{f_{j}}^{-1}(E)\right|-\sum_{i=0}^{j-1}\left|f_{j}^{-1}(E)\cap F_{i}\right|+\frac{\epsilon}{2^{j}}\right) \nonumber \\
&\leq\left(1-\frac{Q}{\lambda}\right)^{j}\left|f_{1}^{-1}(E)\right|+\sum_{i=0}^{j}\frac{\epsilon}{2^{i}}. \label{I-Rs}
\end{align*}
Let $f$ be a curve in $U_{j}$.  Applying \eqref{partition} and \eqref{Cndisjoint}, we obtain
\begin{align*}
\left|f^{-1}(E)\right|&\leq\sum_{k=1}^{L_{j}}\left(\left|{I_{k}}^{(j)}\right|-\left|{R_{k}}^{(j)}\right|\right)+\sum_{i=0}^{j}\left|f^{-1}(E)\cap{F_{i}}\right|\\
&\leq\left(1-\frac{Q}{\lambda}\right)^{j}\left|f_{1}^{-1}(E)\right|+\sum_{i=0}^{j}\left|{f}^{-1}(E)\cap F_{i}\right|+\sum_{i=0}^{j}\frac{\epsilon}{2^{i}}.
\end{align*}
\end{proof}
\begin{remark}\label{aeremark}
In what follows we will consider the set $[0,1]$ together with the Lebesgue measure and the $\sigma$-algebra of Lebesgue measurable subsets of $[0,1]$ as a probability space. Measurable functions from $[0,1]$ to $H$ will be viewed as random variables on this probability space. In particular, when $\phi$ is a piecewise-linear mapping from $[0,1]$ to $H$, the mapping $t\mapsto\phi'(t)$ defines a random variable on $[0,1]$. Here, we use the identification of $\phi'(t)$ with an element of $H$, given by Notation~\ref{notation}. In such a scenario we will also set
\begin{equation*}
\left\|\phi'\right\|=\sup\left\{\left\|\phi'(t)\right\|:\phi\mbox{ is differentiable at }t\right\}.
\end{equation*}
Finally, for a random variable $X$ on $[0,1]$, the expectation of $X$ is denoted by $\mathbb{E}(X)$.
\end{remark}
\begin{definition}\label{rvs}
For $1\leq{n}\leq{N}$ we define a sequence of random variables $X_{n}$ and a sequence of $\sigma$-algebras $\mathcal{F}_{n}$ on the probability space $[0,1]$ by
\begin{equation*}
X_{n}=\begin{cases} 0 & \mbox{ if }n=1 \\
\sum_{i=1}^{n-1}\phi_{i}' & \mbox{ if }n\geq{1}
\end{cases}\mbox{ and }\mathcal{F}_{n}=\sigma(X_{n}).
\end{equation*}
where the mappings $\phi_{i}$ are given by step~5 of Algorithm~\ref{algorithm}. 
\end{definition}
\begin{lemma}\label{lemma:martingale}
The sequence $X_{1},\ldots,X_{N}$ is a martingale on the probability space $[0,1]$, with respect to the filtration $\mathcal{F}_{n}$. Moreover, we have $\mathbb{E}\left(\left\|X_{n}\right\|^{2}\right)<\infty$ for all $n$.
\end{lemma}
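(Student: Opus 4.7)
The lemma has two parts, and the easier is the moment bound. By Observation~\ref{piecewise}, part 2, each $\phi_i'$ is bounded in norm by $1/\lambda$ wherever it is defined, so $\|X_n(t)\|\leq(n-1)/\lambda\leq N/\lambda$ uniformly in $t$. This uniform bound immediately gives $\mathbb{E}(\|X_n\|^2)<\infty$.

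For the martingale identity $\mathbb{E}(X_{n+1}\mid\mathcal{F}_n)=X_n$, since $X_{n+1}-X_n=\phi_n'$ and $X_n$ is $\mathcal{F}_n$-measurable by construction, the task reduces to showing that the conditional expectation $\mathbb{E}(\phi_n'\mid\mathcal{F}_n)$ vanishes; equivalently, that $\int_A\phi_n'(t)\,dt=0$ for every $A\in\sigma(X_n)$. The plan rests on two ingredients. First, Observation~\ref{constantobservation} tells us that every $\phi_p'$ with $p<n$ is constant on each interval $I_k^{(n)}$, and consequently $X_n=\sum_{p=1}^{n-1}\phi_p'$ is itself constant on each $I_k^{(n)}$ with some value $v_k^{(n)}\in H$. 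Second, from Algorithm~\ref{algorithm}, step~5 and \eqref{zero} we have $\phi_n'\equiv 0$ off $\bigcup_{k=1}^{L_n}I_k^{(n)}$, while Observation~\ref{piecewise}, part 1, together with $\phi_n(t)=\varphi_k(t)p_k$ on $I_k^{(n)}$, gives $\int_{I_k^{(n)}}\phi_n'\,dt=0$ for each $k\leq L_n$. Writing $A=X_n^{-1}(B)$ for a Borel set $B\subseteq H$, the constancy of $X_n$ on $I_k^{(n)}$ forces the interval to lie either entirely in $A$ (when $v_k^{(n)}\in B$) or entirely outside it, modulo a measure-zero set of non-differentiable points; splitting $\int_A\phi_n'$ along these intervals and the complement where $\phi_n'$ already vanishes, every summand is zero, and the martingale identity follows.

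The one delicate point I anticipate is reconciling the definition $\mathcal{F}_n=\sigma(X_n)$ with the requirement, for the later application of Theorem~\ref{kolmogorov}, that $\{\mathcal{F}_n\}$ be an increasing filtration. I would address this by observing that by iterating Observation~\ref{constantobservation} each $X_i$ with $i\leq n$ is also constant on every $I_k^{(n)}$, so one may equivalently replace $\mathcal{F}_n$ by the natural $\sigma$-algebra $\sigma(X_1,\ldots,X_n)$---which is trivially an increasing filtration---without disturbing the argument above, since every $I_k^{(n)}$ still sits inside a single atom of this enlarged $\sigma$-algebra.
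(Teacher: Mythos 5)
Your proof is correct and follows essentially the same route as the paper: both arguments rest on Observation~\ref{constantobservation} (so that $X_{n}$ is constant on each ${I_{k}}^{(n)}$, hence every set in $\sigma(X_{n})$ is, up to the null set of non-differentiability points, a union of whole intervals ${I_{k}}^{(n)}$ together with a set on which $\phi_{n}'$ vanishes) combined with $\int_{{I_{k}}^{(n)}}\phi_{n}'=0$. Your closing remark about replacing $\sigma(X_{n})$ by the nested family $\sigma(X_{1},\ldots,X_{n})$ correctly addresses a point the paper leaves implicit --- its generating collection $\mathcal{J}_{n}$ in fact generates $\sigma(\phi_{1}',\ldots,\phi_{n-1}')$, which is genuinely increasing --- and your fix goes through because that larger $\sigma$-algebra is still contained in the algebra of sets that respect the partition into the intervals ${I_{k}}^{(n)}$.
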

\begin{proof}
The $X_{i}$ are trivially $\mathcal{F}_{n}$ measurable and satisfy $\mathbb{E}(\left\|X_{n}\right\|),\mathbb{E}(\left\|X_{n}\right\|^{2})<\infty$. Therefore, we only need to verify $\mathbb{E}(X_{n+1}|A)=\mathbb{E}(X_{n}|A)$ for all $A\in\mathcal{F}_{n}$.

Let $\mathcal{S}_{n}$ be the collection of all subsets $S$ of $[0,1]$ with
the property that for all $k$, either $I_{k}^{(n)}\subseteq{S}$ or
$I_{k}^{(n)}\cap{S}=\emptyset$. Observe that $\mathcal{S}_{n}$ is a $\sigma$-algebra. We claim that
$\mathcal{F}_{n}$ is a sub-$\sigma$-algebra of $\mathcal{S}_{n}$. Assume the claim is valid and pick a set $A\in\mathcal{F}_{n}$. Since $A$ belongs to the $\sigma$-algebra $\mathcal{S}_{n}$ we have that each interval ${I_{k}}^{(n)}$ is either a subset of $A$ or is disjoint from $A$. Let ${I_{k_{1}}}^{(n)},\ldots,{I_{k_{s}}}^{(n)}$ be those intervals which are contained in $A$. By Algorithm \ref{algorithm}, step 5, Definition \ref{piecewisedef} and Observation \ref{piecewise} we have that $\phi_{n}(t)=0$ whenever $t\notin\bigcup_{k=1}^{L_{n}}{I_{k}}^{(n)}$. Further, $\int_{{I_{k}}^{(n)}}\phi_{n}'(t)dt=0$ for all $n$ and $1\leq k\leq L_{n}$. Using these facts, we obtain
\begin{align*}
\mathbb{E}(X_{n+1}|A)&=\mathbb{E}(\phi_{n}'|A)+\mathbb{E}(X_{n}|A)\\
&=\sum_{i=1}^{s}\mathbb{E}(\phi_{n}'|{I_{k_{i}}}^{(n)})+\mathbb{E}(X_{n}|A)=\sum_{i=1}^{s}\int_{{I_{k_{i}}}^{(n)}}\phi_{n}'+\mathbb{E}(X_{n}|A)\\
&=\mathbb{E}(X_{n}|A).
\end{align*}
To complete the proof, we need to verify the earlier claim that $\mathcal{F}_{n}$ is a sub-$\sigma$-algebra of $\mathcal{S}_{n}$. Since each $\phi_{i}'$ is piecewise constant, the $\sigma$-algebra $\mathcal{F}_{n}$ is generated by the collection of sets
\begin{equation*}
\mathcal{J}_{n}=\left\{(\phi_{i}')^{-1}(v)\mbox{ : }v\in{H}\mbox{, }1\leq{i}\leq{n-1}\right\}.
\end{equation*}
It is enough to show that $\mathcal{J}_{n}$ is contained in $\mathcal{S}_{n}$. Let $J\in\mathcal{J}_{n}$ and write $J=(\phi_{i}')^{-1}(v)$ for some $v\in{H}$ and $1\leq{i}\leq{n-1}$. By Observation \ref{constantobservation} we have that ${\phi_{i}}'$ is constant on each of the intervals ${I_{k}}^{(n)}$. Therefore, each interval ${I_{k}}^{(n)}$ is either a subset of $J$ or is disjoint from $J$. Hence $J$ belongs to the $\sigma$-algebra $\mathcal{S}_{n}$.
\end{proof}
\begin{lemma}\label{stopsetlemma}
\begin{equation*}
\left|\bigcup_{i=1}^{N}C_{i}\right|<2\epsilon.
\end{equation*}
\end{lemma}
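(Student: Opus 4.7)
The plan is to apply Kolmogorov's martingale inequality (Theorem~\ref{kolmogorov}) to the Hilbert-space valued martingale $\{X_n\}_{n=1}^{N}$ from Definition~\ref{rvs} with threshold $\kappa = \sigma/8 - 1/\lambda$. The strategy reduces to establishing the containment $\bigcup_{n=1}^{N} C_n \subseteq \{t \in [0,1] : \max_{1\leq n\leq N}\|X_n(t)\| \geq \kappa\}$, since Kolmogorov's inequality combined with the choice of $N$ in \eqref{eq:Ndef} will then deliver the bound immediately. The core of the argument is a ``coupling'' estimate asserting
\begin{equation*}
\|f_n'(t) - f_1'(t) - X_n(t)\| \leq \sigma/8 \qquad\text{for every } t \in I_k^{(n)}.
\end{equation*}
To establish this I would telescope $f_n' - f_1' = \sum_{i=1}^{n-1}(f_{i+1}' - f_i')$ and split each term as $(f_{i+1}' - g_i') + (g_i' - f_i')$. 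The first piece has sup-norm at most $\delta_i$ because $f_{i+1} \in U_i = B(g_i, \delta_i)$. The second equals $\phi_i'$ at any point where $\phi_i$ is differentiable and which lies outside $T_i$, by the construction of $g_i$ in Lemma~\ref{smoothing} and step~7 of Algorithm~\ref{algorithm}. Crucially, each $I_k^{(n)}$ is contained in $[0,1]\setminus\bigcup_{i<n} F_i$ by step~1 of the algorithm, and $T_i$ together with the non-differentiability points of $\phi_i$ is contained in $F_i$ by step~8; hence $g_i' = f_i' + \phi_i'$ holds pointwise on all of $I_k^{(n)}$ for every $i<n$. The telescoping residual is then bounded by $\sum_{i\geq 1}\delta_i < \sum_{i\geq 1}\sigma/2^{i+3} = \sigma/8$ using the $\delta_i$-constraint at step~9.

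Granting this estimate, the containment $C_n \subseteq \{\|X_n\| \geq \kappa\}$ is direct. If $t$ lies in an interval $I_k^{(n)} \subseteq C_n$ (so $L_n < k \leq K_n$), step~4 furnishes some $s \in I_k^{(n)}$ with $\|f_n'(s) + \widetilde{\phi}_n'(s) - f_1'(s)\| \geq \sigma/4$. Applying the coupling estimate at $s$ together with the bound $\|\widetilde{\phi}_n'(s)\| \leq 1/\lambda$ from Observation~\ref{piecewise}, part~2, yields $\|X_n(s)\| \geq \sigma/4 - \sigma/8 - 1/\lambda = \kappa$. By Observation~\ref{constantobservation}, every $\phi_i'$ with $i<n$ is constant on $I_k^{(n)}$, so $X_n$ is itself constant there; thus $\|X_n(t)\| \geq \kappa$ for every $t \in I_k^{(n)}$.

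Kolmogorov's inequality then delivers $|\bigcup_{n=1}^{N} C_n| \leq \mathbb{E}(\|X_N\|^2)/\kappa^2$. Orthogonality of martingale increments in the Hilbert space $H$, together with $X_1 = 0$ and Observation~\ref{piecewise}, part~2, gives $\mathbb{E}(\|X_N\|^2) = \sum_{i=1}^{N-1}\mathbb{E}(\|\phi_i'\|^2) \leq (N-1)/\lambda^2$. Since $N = [\lambda^2\kappa^2\epsilon]$ forces $N - 1 < \lambda^2\kappa^2\epsilon$, this yields $|\bigcup_n C_n| < \epsilon < 2\epsilon$. The main technical care lies in the preliminary coupling estimate: one must verify that each $I_k^{(n)}$ truly avoids every earlier $T_i$ and every non-differentiability point of $\phi_i$ for $i<n$, so that the clean identity $g_i' - f_i' = \phi_i'$ may be invoked throughout the whole interval rather than merely almost everywhere.
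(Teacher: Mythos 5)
Your proof is correct and follows essentially the same route as the paper: bound $\bigcup_n C_n$ by the set where $\max_{1\leq n\leq N}\|X_n\|$ exceeds $\kappa=\sigma/8-1/\lambda$, apply Kolmogorov's inequality, and estimate $\mathbb{E}(\|X_N\|^2)\leq N/\lambda^2$ via orthogonality of the increments $\phi_i'$ (which the paper verifies by the direct computation $\mathbb{E}\langle\phi_n',\phi_p'\rangle=0$ rather than by abstract martingale orthogonality, a safer route given that $\mathcal{F}_n=\sigma(X_n)$ does not obviously make each individual $\phi_p'$ measurable). Your one refinement---that each $I_k^{(n)}\subseteq[0,1]\setminus\bigcup_{i<n}F_i$ already avoids every earlier $T_i$, so the containment $C_n\subseteq\{\|X_n\|\geq\kappa\}$ holds with no exceptional set---is valid and in fact yields the sharper bound $\epsilon$, whereas the paper removes $\bigcup_{i<n}T_i$ from $C_n$ and pays an extra $\epsilon$ to reach $2\epsilon$.
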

\begin{proof}
From 4 of Algorithm \ref{algorithm} we have that the set $C_{n}$ is a
finite union of pairwise disjoint closed intervals ${I_{k}}^{(n)}$. Let
${I_{k}}^{(n)}$ be one of the intervals composing $C_{n}$. Then, by Algorithm~\ref{algorithm}, step~4 and Observation~\ref{piecewise}, part~2 we have
\begin{align*}
\frac{\sigma}{4}\leq\sup_{t\in{{I_{k}}^{(n)}}}\left\|f_{n}'(t)+\widetilde{\phi}_{n}'(t)-f_{1}'(t)\right\|&\leq\sup_{t\in{{I_{k}}^{(n)}}}\left\|f_{n}'(t)-f_{1}'(t)\right\|+\left\|\widetilde{\phi}_{n}'\right\|\\
&\leq\sup_{t\in{{I_{k}}^{(n)}}}\left\|f_{n}'(t)-f_{1}'(t)\right\|+\frac{1}{\lambda}.
\end{align*}
The inequality above implies
\begin{equation}\label{Cninequality}
\sup_{t\in{{I_{k}}^{(n)}}}\left\|f_{n}'(t)-f_{1}'(t)\right\|\geq\frac{\sigma}{4}-\frac{1}{\lambda}.
\end{equation}
By steps 9 and 10 of Algorithm \ref{algorithm} we have that
$\left\|f_{i}'(t)-g_{i-1}'(t)\right\|<\delta_{i-1}$ for all
$t\in[0,1]$ and each $i\geq{2}$. Moreover, using step 9 and Definition \ref{rvs}, 
\begin{align}
\left\|f_{n}'(t)-f_{1}'(t)\right\|&\leq\sum_{i=1}^{n-1}\left\|f_{i+1}'(t)-g_{i}'(t)\right\|+\left\|\sum_{i=1}^{n-1}g_{i}'(t)-f_{i}'(t)\right\|\nonumber \\
&\leq\sum_{i=1}^{n-1}\delta_{i}+\left\|\sum_{i=1}^{n-1}\phi_{i}'(t)\right\|+\sum_{i=1}^{n-1}\left\|g_{i}'(t)-(f_{i}'(t)+\phi_{i}'(t))\right\|\nonumber \\
&<\frac{\sigma}{8}+\left\|X_{n}(t)\right\|+\sum_{i=1}^{n-1}\left\|g_{i}'(t)-(f_{i}'(t)+\phi_{i}'(t))\right\|.\label{Cnineq2}
\end{align}
Combining \eqref{Cninequality} and \eqref{Cnineq2} yields
\begin{equation*}
\sup_{t\in{{I_{k}}^{(n)}}}\left(\left\|X_{n}(t)\right\|+\sum_{i=1}^{n-1}\left\|g_{i}'(t)-(f_{i}'(t)+\phi_{i}'(t))\right\|\right)\geq\frac{\sigma}{8}-\frac{1}{\lambda}.
\end{equation*}
By Observation \ref{constantobservation} we have that $\phi_{i}'$ is constant when restricted to each of the intervals
$I_{k}^{(n)}$ whenever $1\leq{i}\leq{n-1}$. Therefore, the map $X_{n}$ is constant when restricted to
each of the intervals ${I_{k}}^{(n)}$. Hence, we may rewrite the
inequality above as
\begin{equation*}
\left\|X_{n}(t)\right\|+\sup_{s\in{{I_{k}}^{(n)}}}\left(\sum_{i=1}^{n-1}\left\|g_{i}'(s)-(f_{i}'(s)+\phi_{i}'(s))\right\|\right)\geq\frac{\sigma}{8}-\frac{1}{\lambda}\mbox{
for all }t\in{{I_{k}}^{(n)}}.
\end{equation*}
In view of Algorithm~\ref{algorithm}, step~7, we may conclude that
\begin{equation*}\label{Cncriterion}
\left\|X_{n}(t)\right\|>\frac{\sigma}{8}-\frac{1}{\lambda}\mbox{
for all }t\in{C_{n}}\setminus\left(\bigcup_{i=1}^{n-1}T_{i}\right),
\end{equation*}
and this subsequently implies
\begin{equation*}
\left|\bigcup_{n=1}^{N}C_{n}\right|\leq\left|\left\{t\in[0,1]\mbox{ :
}\max_{1\leq{n}\leq{N}}\left\|X_{n}(t)\right\|>\frac{\sigma}{8}-\frac{1}{\lambda}\right\}\right|+\epsilon.
\end{equation*}
From Lemma~\ref{lemma:martingale}, the sequence $(X_{n})$ is a martingale with $\mathbb{E}(\left\|X_{n}\right\|^{2})<\infty$. Applying Theorem~\ref{kolmogorov} (Kolmogorov's Martingale Theorem),
we obtain
\begin{equation}\label{martingale1}
\left|\left\{t\in[0,1]\mbox{ :
}\max_{1\leq{n}\leq{N}}\left\|X_{n}(t)\right\|>\frac{\sigma}{8}-\frac{1}{\lambda}\right\}\right|\leq\frac{\mathbb{E}\left(\left\|X_{N}\right\|^{2}\right)}{\left(\frac{\sigma}{8}-\frac{1}{\lambda}\right)^{2}}.
\end{equation}
We now proceed to calculate an estimate for
$\mathbb{E}\left(\left\|X_{N}\right\|^{2}\right)$. From Definition~\ref{rvs}, we have that
\begin{equation*}
\mathbb{E}\left(\left\|X_{N}\right\|^{2}\right)=\sum_{n=1}^{N-1}\mathbb{E}\left(\left\|\phi_{n}'\right\|^{2}\right)+2\sum_{N-1\geq n>m}\mathbb{E}\left(\langle{\phi_{n}',\phi_{m}'}\rangle\right)
\end{equation*}
Recall, using Algorithm \ref{algorithm}, step 5 and Observation \ref{piecewise} that $\left\|\phi_{i}'\right\|<1/\lambda$ for each $i$. We claim that $\mathbb{E}\left(\langle{\phi_{n}',\phi_{p}'}\rangle\right)=0$
whenever $n>p$. Assuming that this claim is correct, the equation above gives
\begin{equation}\label{martingale2}
\mathbb{E}\left(\left\|X_{N}\right\|^{2}\right)=\sum_{n=1}^{N-1}\mathbb{E}\left(\left\|\phi_{n}'\right\|^{2}\right)\leq\sum_{n=1}^{N-1}\left(\frac{1}{\lambda}\right)^{2}<\frac{N}{\lambda^{2}}.
\end{equation}
Now, from \eqref{martingale1}, \eqref{martingale2} and finally \eqref{eq:Ndef}, we have
\begin{equation*}
\left|\left\{t\in[0,1]\mbox{ :
}\max_{1\leq{n}\leq{N}}\left\|X_{n}(t)\right\|>\frac{\sigma}{8}-\frac{1}{\lambda}\right\}\right|<\frac{N}{\lambda^{2}\left(\frac{\sigma}{8}-\frac{1}{\lambda}\right)^{2}}\leq\epsilon.
\end{equation*}
We complete the proof by verifying the earlier claim. Suppose $1\leq{p}<n\leq{N-1}$. Then by Observation \ref{piecewise}, part 1 we have that $\phi_{n}$ is given by
\begin{equation*}\label{phin}
\phi_{n}(t)=\begin{cases} {\varphi_{(k,n)}}{e_{k}}^{(n)} & \mbox{ if
}t\in{{I_{k}}^{(n)}} \\
0 & \mbox{ otherwise,}\end{cases}
\end{equation*}
where the functions $\varphi_{(k,n)}:{I_{k}}^{(n)}\to{H}$ satisfy $\int_{{I_{k}}^{(n)}}{\varphi_{(k,n)}}'=0$, and the ${e_{k}}^{(n)}$ are elements of $H$. Moreover, by Observation
\ref{constantobservation} we have that $\phi_{p}'$ is constant on each of the intervals
${I_{k}}^{(n)}$. We write $\phi_{p}'(t)={v_{k}}^{(n)}$ for all $t\in{{I_{k}}^{(n)}}$, where the ${v_{k}}^{(n)}$ are points in $H$. Now, we have
\begin{equation*}
\mathbb{E}\left(\langle{\phi_{n}',\phi_{p}'}\rangle\right)=\int_{[0,1]}\langle{\phi_{n}',\phi_{p}'}\rangle=\sum_{k=1}^{L_{n}}\langle{{e_{k}}^{(n)},{v_{k}}^{(n)}}\rangle\int_{{I_{k}}^{(n)}}{\varphi_{(k,n)}}'=0.
\end{equation*}
as required.
\end{proof}
\begin{proof}[Proof of Lemma \ref{mainlemma}]
Set $V=U_{N}$ and note that $V\subset U$ using Algorithm~\ref{algorithm}, step 9 and $B(f_{1},\sigma)\subseteq U$. By Lemma \ref{measure}, part (ii) we have that
\begin{equation*}
\sup_{f\in{V}}\left|f^{-1}(E)\right|\leq\left(1-\frac{Q}{\lambda}\right)^{N}\left|f_{1}^{-1}(E)\right|+\left|\bigcup_{i=0}^{N}C_{i}\right|+8\sum_{i=0}^{N}\frac{\epsilon}{2^{i}}.
\end{equation*}
Combining this inequality with Lemma~\ref{stopsetlemma} we obtain
\begin{equation*}
\sup_{f\in{V}}\left|f^{-1}(E)\right|<\left(1-\frac{Q}{\lambda}\right)^{N}\sup_{f\in{U}}\left|f^{-1}(E)\right|+18\epsilon.
\end{equation*}
Observe that \eqref{lambda} and \eqref{eq:Ndef} imply $\left(1-\frac{Q}{\lambda}\right)^{N}<\frac{1}{4}$, whilst $18\epsilon<\frac{1}{4}\sup_{f\in{U}}\left|f^{-1}(E)\right|$ from \eqref{eq:epsilon}.
\end{proof}
We are now ready to prove our main result:
\begin{theorem}\label{mainresult}
Every $\sigma$-porous subset of a Hilbert space $H$ is $\Gamma_{1}$-null in $H$.
\end{theorem}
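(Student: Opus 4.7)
The plan is a straightforward Baire category argument driven by Lemma \ref{mainlemma}. Since $|-|$ denotes outer Lebesgue measure and outer measure is countably subadditive, a countable union of $\Gamma_{1}$-null sets is again $\Gamma_{1}$-null; so it suffices to prove that every (single) $c$-porous subset $E\subset H$ is $\Gamma_{1}$-null.

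Fix a $c$-porous set $E\subset H$. Given any open ball $U\subset\Gamma_{1}(H)$, iterating Lemma \ref{mainlemma} produces a nested sequence of open balls $U=U_{0}\supset U_{1}\supset U_{2}\supset\cdots$ satisfying
\begin{equation*}
\sup_{f\in U_{n+1}}|f^{-1}(E)|<\tfrac{1}{2}\sup_{f\in U_{n}}|f^{-1}(E)|,
\end{equation*}
so that $\sup_{f\in U_{n}}|f^{-1}(E)|<2^{-n}$. Consequently, for every $k\geq 1$, the set
\begin{equation*}
G_{k}=\bigcup\bigl\{V\subset\Gamma_{1}(H):V\text{ is an open ball with }\sup_{f\in V}|f^{-1}(E)|<1/k\bigr\}
\end{equation*}
is open and dense in $\Gamma_{1}(H)$. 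By the Baire category theorem, $R:=\bigcap_{k\geq 1}G_{k}$ is residual. Any $f\in R$ lies, for each $k$, in some open ball on which the supremum of $|g^{-1}(E)|$ is less than $1/k$, and hence $|f^{-1}(E)|<1/k$; letting $k\to\infty$ yields $|f^{-1}(E)|=0$ for every $f\in R$, as required.

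Finally, writing an arbitrary $\sigma$-porous set as $P=\bigcup_{n}E_{n}$ with each $E_{n}$ porous, the previous step produces a residual set $R_{n}\subset\Gamma_{1}(H)$ on which $|f^{-1}(E_{n})|=0$. Then $R:=\bigcap_{n}R_{n}$ is residual, and for every $f\in R$, countable subadditivity of outer Lebesgue measure gives $|f^{-1}(P)|\leq\sum_{n}|f^{-1}(E_{n})|=0$. No new obstacle arises at this stage: essentially all the analytic content — the martingale estimate controlling the perturbation of derivatives, the Vitali covering, and the construction of the smoothed curves $g_{n}$ — is already packaged inside Lemma \ref{mainlemma}, and the theorem is its clean Baire category pay-off.
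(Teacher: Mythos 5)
Your argument is correct and is essentially the paper's own proof: iterate Lemma \ref{mainlemma} to show that the open sets of curves on which $\sup|f^{-1}(E)|$ is small are dense, take a countable intersection to get a residual set, and handle the countable union of porous pieces by intersecting residual sets (the paper merely does both intersections at once via doubly-indexed sets $S_{n}^{(m)}$, and invokes Zaj\'i\v{c}ek's theorem to fix a single porosity constant, which your piece-by-piece application of the lemma renders unnecessary). No gap.
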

\begin{proof}
Let $P$ be a $\sigma$-porous subset of $H$ and suppose $c\in(0,1/2)$. By a theorem of Zajicek, \cite[Theorem~4.5]{zajicek75}, we can write $P=\bigcup_{m=1}^{\infty}E_{m}$ where each set $E_{m}$ is a $c$-porous subset of $H$. Fix $m\in\mathbb{N}$ and pick any open ball $U$ in $\Gamma_{1}(H)$. Then, by repeatedly applying Lemma \ref{mainlemma}, we can construct a sequence $\left\{{U_{n}}^{(m)}\right\}_{n=1}^{\infty}$ of open balls in $\Gamma_{1}(H)$ satisfying ${U_{n+1}}^{(m)}\subseteq{{U_{n}}^{(m)}}\subseteq{U}\subset{\Gamma_{1}(H)}$ and
\begin{equation*}
\sup_{f\in{{U_{n}}^{(m)}}}\left|f^{-1}(E_{m})\right|<\frac{1}{2^{n}}\sup_{f\in{U}}\left|f^{-1}(E_{m})\right|<\frac{1}{2^{n}}\mbox{ for all }n\geq 1.
\end{equation*}
Let ${S_{n}}^{(m)}$ be the union of all open balls $V$ in $\Gamma_{1}(H)$ with the property \newline $\sup_{f\in{V}}\left|f^{-1}(E_{m})\right|<\frac{1}{2^{n}}$. By the above discussion we have that each set ${S_{n}}^{(m)}$ has non-empty intersection with any open subset of $H$. Hence ${S_{n}}^{(m)}$ is dense in $\Gamma_{1}(H)$. Clearly, ${S_{n}}^{(m)}$ is also open. Thus, the set $R=\bigcap_{n,m\in\mathbb{N}}{S_{n}}^{(m)}$ is a residual subset of $\Gamma_{1}(H)$. Let $\gamma$ be a curve in $R$. Then,
\begin{equation*}
\left|\gamma^{-1}(E_{m})\right|<\frac{1}{2^{n}}\mbox{ for all }n,m\in\mathbb{N}.
\end{equation*}
This implies $\left|\gamma^{-1}(E_{m})\right|=0$ for all $m\in\mathbb{N}$, and we are done.
\end{proof}
\section{Power-porosity and $\Gamma_{1}$}
In the present section we prove that, unlike $\sigma$-porous sets, power-$p$-porous sets need not be $\Gamma_{1}$-null. Recall that porous and $\sigma$-porous subsets of a Euclidean space have Lebesgue measure zero. We begin by showing that this property does not extend to all power-$p$ porous sets.
\begin{lemma}\label{lemma:powerpcantor}
If $p>1$, there exists a power-$p$-porous subset $C$ of
$[0,1]$ with positive Lebesgue measure.
\end{lemma}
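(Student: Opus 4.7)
The plan is to construct $C$ as a fat Cantor-like subset of $[0, L_0]$ for a suitably small $L_0 > 0$, to be chosen below. Starting from the closed interval $[0, L_0]$, iteratively remove open middle subintervals: at step $n \geq 1$, given $2^{n-1}$ closed intervals each of length $L_{n-1}$, remove from the centre of each an open interval of length $\ell_n := L_{n-1}^p$, leaving $2^n$ closed intervals of common length $L_n = (L_{n-1} - \ell_n)/2$. Define $C$ as the intersection over $n$ of these unions of remaining closed intervals; $C$ is closed and nonempty, and $C \subseteq [0, L_0] \subseteq [0, 1]$.

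To verify power-$p$-porosity at an arbitrary $x \in C$, fix $\epsilon > 0$. Since $L_n \leq L_{n-1}/2$, we have $L_n \leq L_0/2^n \to 0$, so there is an $n$ with $L_{n-1}/2 < \epsilon$. The point $x$ lies in some level-$(n-1)$ interval $I$ of length $L_{n-1}$; let $h$ be the midpoint of $I$, which coincides with the midpoint of the open middle interval $J$ of length $\ell_n$ removed from $I$ at step $n$. Then $|h - x| \leq L_{n-1}/2 < \epsilon$, and any $r$ with $(L_{n-1}/2)^p < r < \ell_n/2$ serves as a valid witness: $B(h, r) \subseteq J \subseteq [0,1] \setminus C$, and $r > |h - x|^p$. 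Such $r$ exists precisely when $\ell_n/2 > (L_{n-1}/2)^p$, equivalently $\ell_n > L_{n-1}^p / 2^{p-1}$. With the choice $\ell_n = L_{n-1}^p$, this reduces to $2^{p-1} > 1$, which holds because $p > 1$.

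For positive Lebesgue measure, the total amount removed is $\sum_{n=1}^\infty 2^{n-1} \ell_n$. Using $L_{n-1} \leq L_0/2^{n-1}$, one has $\ell_n \leq L_0^p / 2^{(n-1)p}$, so the total removed is at most $L_0^p \sum_{n \geq 1} 2^{(n-1)(1-p)} = L_0^p / (1 - 2^{1-p})$, a convergent geometric sum since $p > 1$. Choosing $L_0$ small enough that $L_0^{p-1} < 1 - 2^{1-p}$ makes this quantity strictly less than $L_0$, so $\mathcal{L}^1(C) \geq L_0 - L_0^p/(1 - 2^{1-p}) > 0$. The only real tension in the argument is between the porosity requirement, which demands $\ell_n \gtrsim L_{n-1}^p$, and the measure constraint, which demands $\sum 2^{n-1} \ell_n < L_0$; the hypothesis $p > 1$ resolves this tension because the porosity lower bound is superlinear in $L_{n-1}$ while $L_n$ decays essentially geometrically, and no obstacle arises beyond this elementary bookkeeping.
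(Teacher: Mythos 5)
Your proof is correct and follows essentially the same approach as the paper: both build a fat Cantor set and witness power-$p$-porosity at a point $x$ by taking $h$ to be the midpoint of the next removed middle gap of the interval containing $x$. The only difference is parametric --- the paper fixes the removed lengths as $\lambda^{n}$ while you take $\ell_{n}=L_{n-1}^{p}$ recursively and shrink the starting interval $[0,L_{0}]$, which makes the tension between the porosity lower bound and the measure bound resolve uniformly for every $p>1$.
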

\begin{proof}
We shall construct a Cantor type set $C$ with the desired properties. 

Let $C$ be the Cantor set obtained, following the normal construction. On step $n$, remove $2^{n-1}$ intervals, each of length $\lambda^{n}$ in the usual manner (instead of length $3^{-n}$ for the ternary Cantor set). We can compute the Lebesgue measure of $C$ explicitly: $\left|C\right|=\frac{1-3\lambda}{1-2\lambda}>0$.

We now show that $C$ is a power-$p$-porous set. Fix $x\in{C}$ and $\epsilon>0$. Let $N$ be a natural number large enough so
that
\begin{equation}\label{eq:N}
2^{-(N+1)}<\epsilon\mbox{ and
}N\geq\frac{\log{2}}{\log{(2^{p}\lambda)}}-1.
\end{equation}
Since $x\in{C}$, there exists a remaining interval of the $N$th step
containing $x$. Call this interval ${R}$. By construction, there is a
deleted interval $D$ of the $(N+1)$th step of length $\lambda^{N+1}$ whose
midpoint $h$ coincides with the midpoint of $R$. Let $h$ denote the midpoint of
$D$ and $R$. Then $D=B(h,\lambda^{N+1}/2)$ is a ball which lies inside the
complement of $C$. Moreover,
\begin{equation}\label{eq:h-x}
\left|h-x\right|\leq\mbox{length}(R)/2\leq{2^{-(N+1)}}<\epsilon.
\end{equation}
Together, \eqref{eq:N} and \eqref{eq:h-x} imply that $\frac{\lambda^{N+1}}{2}\geq\left(\frac{1}{2^{N+1}}\right)^{p}\geq\left|h-x\right|^{p}$.
\end{proof}
The next Lemma reveals that subsets of $[0,1]$ with positive Lebesgue measure give rise to a class of
subsets of the plane which have positive Lebesgue measure on all curves
belonging to some open subset of $\Gamma_{1}=\Gamma_{1}(\mathbb{R}^{2})$.

\begin{lemma}\label{lemma:FtimesR}
Suppose $F$ is a subset of $[0,1]$ with positive Lebesgue measure and let
$S=F\times\mathbb{R}$. There exists an open subset $U$ of $\Gamma_{1}$ such that $S$
has positive Lebesgue measure on all curves in $U$.
\end{lemma}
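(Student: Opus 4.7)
The plan is to take $U$ to be a small neighbourhood of the straight-line curve $\gamma_{0}(t)=(t,0)$ in $\Gamma_{1}=\Gamma_{1}(\mathbb{R}^{2})$ and argue that every curve in $U$ has its first coordinate close to the identity, hence the preimage of $F$ under the first coordinate is forced to have positive measure by a change-of-variables estimate. Writing $\gamma=(\gamma_{1},\gamma_{2})$, observe that $\gamma(t)\in S=F\times\mathbb{R}$ if and only if $\gamma_{1}(t)\in F$, so the problem reduces to a one-dimensional question about $\gamma_{1}$.

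More precisely, I would begin by choosing $r\in(0,1/2)$ small enough that $|F\cap[r,1-r]|>0$; this is possible since $|F|>0$ and $|F\cap[r,1-r]|\uparrow|F\cap(0,1)|=|F|$ as $r\to 0$. I then set $U=B(\gamma_{0},r)$, the open ball of radius $r$ in $\Gamma_{1}$ around $\gamma_{0}$. For any $\gamma\in U$ the $\Gamma_{1}$-norm bound gives $\|\gamma_{1}-\mathrm{id}\|_{\infty}<r$ and $\|\gamma_{1}'-1\|_{\infty}<r$, so $\gamma_{1}'(t)\in(1-r,1+r)\subset(0,2)$ for all $t\in[0,1]$. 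In particular $\gamma_{1}$ is a strictly increasing $C^{1}$ diffeomorphism from $[0,1]$ onto the interval $[\gamma_{1}(0),\gamma_{1}(1)]$, which contains $[r,1-r]$ since $\gamma_{1}(0)<r$ and $\gamma_{1}(1)>1-r$.

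The main estimate is then just the standard inequality $|\gamma_{1}(A)|\leq\|\gamma_{1}'\|_{\infty}\cdot|A|$, valid for any measurable $A\subseteq[0,1]$, applied to $A=\gamma_{1}^{-1}(F)$. This gives
\begin{equation*}
|F\cap[r,1-r]|\;\leq\;|F\cap\gamma_{1}([0,1])|\;=\;|\gamma_{1}(\gamma_{1}^{-1}(F))|\;\leq\;(1+r)\,|\gamma_{1}^{-1}(F)|,
\end{equation*}
so $|\gamma^{-1}(S)|=|\gamma_{1}^{-1}(F)|\geq|F\cap[r,1-r]|/(1+r)>0$, uniformly in $\gamma\in U$.

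There is really no serious obstacle here: the only point that requires a moment's care is the two-sided control on $\gamma_{1}'$ (which both ensures $\gamma_{1}$ is a diffeomorphism so that $\gamma_{1}^{-1}(F)$ behaves well, and provides the Lipschitz bound used in the change-of-variables inequality) together with the inclusion $\gamma_{1}([0,1])\supseteq[r,1-r]$ so that we can replace the unknown image interval by a fixed one before invoking the density property of $F$. Everything else follows from the definition of the $\Gamma_{1}$-norm.
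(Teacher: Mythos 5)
Your proof is correct and follows exactly the route the paper takes (the paper simply declares the verification around the horizontal curve $\gamma_0(t)=(t,0)$ to be ``easy to check''); your change-of-variables estimate $|F\cap[r,1-r]|\leq(1+r)\,|\gamma_1^{-1}(F)|$ supplies precisely the missing details. No gaps.
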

\begin{proof}
Let $\gamma(t)=(t,0)$ be the horizontal interval. It is now easy to check that for sufficiently small $\delta>0$, we have $\left|\rho^{-1}(S)\right|>0$ for all curves $\rho\in B(\gamma,\delta)$.
\end{proof}
From the next Lemma, we see that measure zero subsets of $\mathbb{R}^{2}$ can fail dramatically to be $\Gamma_{1}$-null.
\begin{lemma}\label{lemma:containsresidual}
There is a subset $T$ of $\mathbb{R}^{2}$ which has
measure zero and contains the image of all curves belonging to a residual
subset of $\Gamma_{1}$.
\end{lemma}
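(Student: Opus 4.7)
The plan is to construct $T$ as a measure-zero $G_{\delta}$ subset of $\mathbb{R}^2$ via a standard Baire-category recipe: exhibit it as an intersection of countably many open sets of arbitrarily small planar Lebesgue measure, each of which contains the image of every curve in a corresponding open dense subset of $\Gamma_1$. The two key ingredients are separability of $\Gamma_1=C^{1}([0,1],\mathbb{R}^2)$ (polynomial curves with rational coefficients are dense in the $\Gamma_1$-norm) and the fact that the image of any $C^{1}$ curve $\gamma:[0,1]\to\mathbb{R}^2$ has zero 2-dimensional Lebesgue measure, being a Lipschitz image of $[0,1]$ with constant at most $\|\gamma'\|_{\infty}$.

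Fix a countable dense subset $\{\gamma_k\}_{k=1}^{\infty}\subseteq\Gamma_1$. For each pair $(n,k)\in\mathbb{N}^{2}$, I would choose $r_{n,k}\in(0,1)$ small enough that the open planar neighbourhood
\begin{equation*}
N_{n,k}=\{x\in\mathbb{R}^{2}:\operatorname{dist}(x,\gamma_{k}([0,1]))<r_{n,k}\}
\end{equation*}
satisfies $\mathcal{L}^{2}(N_{n,k})<2^{-k}/n$. Such a choice is possible because $\gamma_k([0,1])$ is a compact set of zero planar measure, so by outer regularity it is contained in open sets of arbitrarily small area. Then set
\begin{equation*}
V_{n}=\bigcup_{k=1}^{\infty}B(\gamma_{k},r_{n,k})\subseteq\Gamma_{1},\qquad T_{n}=\bigcup_{k=1}^{\infty}N_{n,k}\subseteq\mathbb{R}^{2},\qquad T=\bigcap_{n=1}^{\infty}T_{n}.
\end{equation*}
Each $V_{n}$ is open in $\Gamma_{1}$, and contains every $\gamma_k$; hence $V_n$ is dense by density of $\{\gamma_k\}$. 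Therefore $R=\bigcap_{n}V_{n}$ is a residual subset of $\Gamma_{1}$. The estimate $\mathcal{L}^{2}(T_{n})<1/n$ forces $\mathcal{L}^{2}(T)=0$.

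To verify the required property, suppose $\gamma\in R$ and $n\in\mathbb{N}$. Pick $k$ with $\gamma\in B(\gamma_{k},r_{n,k})$. Since the $\Gamma_{1}$ norm dominates the supremum norm, $\|\gamma-\gamma_{k}\|_{\infty}<r_{n,k}$, so $\gamma(t)\in N_{n,k}\subseteq T_{n}$ for every $t\in[0,1]$. Intersecting over $n$ gives $\gamma([0,1])\subseteq T$, as required.

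No step of this argument poses serious difficulty. The only thing to be mildly careful about is the planar measure estimate on $N_{n,k}$, but since $\gamma_{k}([0,1])$ is rectifiable with length at most $\|\gamma_{k}'\|_{\infty}$, the $r$-tubular neighbourhood has area at most $2r\|\gamma_{k}'\|_{\infty}+\pi r^{2}$, which is smaller than $2^{-k}/n$ once $r=r_{n,k}$ is chosen sufficiently small. Everything else (separability, density of $V_{n}$, Baire category, countable subadditivity for the measure) is automatic.
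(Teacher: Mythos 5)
Your proposal is correct and follows essentially the same route as the paper: both take a countable dense family of curves, surround each image by an open planar set of summably small measure, use the resulting open dense subsets of $\Gamma_{1}$ to produce a residual set, and take $T$ to be (contained in) the intersection over scales of the unions of these neighbourhoods. The only difference is cosmetic — you define $T$ directly as $\bigcap_{n}T_{n}$ and justify the small-measure neighbourhoods via a tubular estimate, whereas the paper defines $T$ as the union of the images of the residual family and leaves the existence of small neighbourhoods implicit.
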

\begin{proof}
Let $\left\{\gamma_{n}\right\}_{n=1}^{\infty}$ be a
countable dense subset of $\Gamma_{1}$. For each $\epsilon>0$ and $n\in\mathbb{N}$,
let $U_{n}(\epsilon)$ be an open neighbourhood of $\gamma_{n}([0,1])$ of Lebesgue measure less than $\epsilon/2^{n}$. Consider an open neighbourhood of $\gamma_{n}$ defined by
\begin{equation*}
Y_{n}(\epsilon)=\left\{\gamma\in{\Gamma_{1}}:\gamma(t)\in{U_{n}(\epsilon)}\mbox{ for
all }t\in[0,1]\right\}
\end{equation*}
Letting $Y(\epsilon)=\bigcup_{n=1}^{\infty}Y_{n}(\epsilon)$, we get an open, dense subset of $\Gamma_{1}$ containing the
sequence $\left\{\gamma_{n}\right\}_{n=1}^{\infty}$. Finally, letting $Y=\bigcap_{m=1}^{\infty}Y(\epsilon_{m})$, where $\epsilon_{m}\to 0$, we get a residual subset of $\Gamma_{1}$. Hence, the set
\begin{equation*}
T=\left\{\gamma(t):\gamma\in{Y},t\in[0,1]\right\}
\end{equation*}
has the desired properties.
\end{proof}
We are now ready to prove the main result of this section.
\begin{theorem}
There exists a measure zero, power-$p$-porous subset $A$ of $\mathbb{R}^{2}$ such that the set of all curves $\gamma\in{\Gamma_{1}}$ on which $A$ has Lebesgue measure zero, is not a residual subset of $\Gamma_{1}$.
\end{theorem}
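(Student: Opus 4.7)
The plan is to combine the three preceding lemmas by taking a suitable intersection. Start with $C\subset[0,1]$, the power-$p$-porous set of positive Lebesgue measure produced by Lemma~\ref{lemma:powerpcantor}. Set $S=C\times\mathbb{R}\subseteq\mathbb{R}^{2}$. By Lemma~\ref{lemma:FtimesR}, there is a non-empty open set $U\subseteq\Gamma_{1}$ such that $|\gamma^{-1}(S)|>0$ for every $\gamma\in U$. Next, let $T$ and the residual set $Y\subseteq\Gamma_{1}$ be the objects supplied by Lemma~\ref{lemma:containsresidual}, so that $T$ has Lebesgue measure zero in $\mathbb{R}^{2}$ and $\gamma([0,1])\subseteq T$ for every $\gamma\in Y$. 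Define
\begin{equation*}
A=S\cap T=(C\times\mathbb{R})\cap T.
\end{equation*}

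Now I would verify that $A$ has the required properties. Since $A\subseteq T$, we have $|A|=0$. To see that $A$ is power-$p$-porous it suffices to show that $S=C\times\mathbb{R}$ is power-$p$-porous (power-$p$-porosity clearly passes to subsets, because a hole witnessing porosity of the ambient set at a point of the subset is automatically a hole for the subset). For a point $(c,y)\in C\times\mathbb{R}$ and $\epsilon>0$, apply power-$p$-porosity of $C$ at $c$ to obtain $h'\in\mathbb{R}$ and $r'>0$ with $|h'-c|<\epsilon$, $(h'-r',h'+r')\cap C=\emptyset$, and $r'>|h'-c|^{p}$. The ball $B((h',y),r')\subset\mathbb{R}^{2}$ projects onto $(h'-r',h'+r')$ in the first coordinate, hence is disjoint from $C\times\mathbb{R}$; also $\|(h',y)-(c,y)\|=|h'-c|<\epsilon$ and $r'>\|(h',y)-(c,y)\|^{p}$, as required.

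Finally I would derive the non-residuality statement by a pigeonhole-style argument in the Baire category. For any $\gamma\in Y$ we have $\gamma([0,1])\subseteq T$, and therefore
\begin{equation*}
\gamma^{-1}(A)=\gamma^{-1}(S)\cap\gamma^{-1}(T)=\gamma^{-1}(S).
\end{equation*}
Since $Y$ is residual and $U$ is non-empty and open, $U\cap Y$ is non-empty (in fact residual in $U$), and for every $\gamma\in U\cap Y$ we have $|\gamma^{-1}(A)|=|\gamma^{-1}(S)|>0$. Suppose for contradiction that $R=\{\gamma\in\Gamma_{1}:|\gamma^{-1}(A)|=0\}$ is residual in $\Gamma_{1}$. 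Then $R\cap Y$ is a residual subset of $\Gamma_{1}$, hence dense, and in particular meets the non-empty open set $U$; but this contradicts the fact that $|\gamma^{-1}(A)|>0$ for every $\gamma\in U\cap Y$.

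The only step that requires thought is the verification that $C\times\mathbb{R}$ inherits power-$p$-porosity from $C$, and this is a short direct argument; the category argument at the end is standard. I do not anticipate a serious obstacle.
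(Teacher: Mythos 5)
Your proposal is correct and follows essentially the same route as the paper: take $A=(C\times\mathbb{R})\cap T$ with $C$, $U$, $T$ and the residual set supplied by Lemmas~\ref{lemma:powerpcantor}--\ref{lemma:containsresidual}, and conclude via the non-meagerness of the intersection of the open set $U$ with the residual set. The only difference is cosmetic: you spell out the verification that $C\times\mathbb{R}$ is power-$p$-porous, which the paper asserts as clear.
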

\begin{proof} Let $C$ be the positive measure, power-$p$-porous subset of
$[0,1]$ given by the conclusion of Lemma \ref{lemma:powerpcantor}. Define a set $B\subset\mathbb{R}^{2}$ by $B=C\times\mathbb{R}$.

It is clear that $B$ is a power-$p$-porous subset of $\mathbb{R}^{2}$. Further, we can apply Lemma \ref{lemma:FtimesR} to deduce that there is an open subset $U$ of $\Gamma_{1}$ such that $B$ has positive Lebesgue measure on every curve in $U$.

By Lemma \ref{lemma:containsresidual}, there exists a subset $T$ of $\mathbb{R}^{2}$ with measure zero containing the image $\gamma([0,1])$ of all curves $\gamma$ belonging to a residual subset $R$ of $\Gamma_{1}$. We now define $A=B\cap{T}$.

Since $A\subseteq{B}$ we have that $A$ is a power-$p$-porous subset of
$\mathbb{R}^{2}$. Simultaneously, $A$ has measure zero in
$\mathbb{R}^{2}$ because $A\subseteq{T}$. Therefore, it only remains to
show that the curves in $\Gamma_{1}$, avoiding $A$, do not form a residual subset of
$\Gamma_{1}$.

Suppose $\gamma\in{U\cap{R}}$. Since $T$ contains
the image of $\gamma$, we have
\begin{equation*}\label{eq:gammainvA}
\gamma^{-1}(A)=\gamma^{-1}(B)\cap\gamma^{-1}(T)=\gamma^{-1}(B)\cap[0,1]=\gamma^{-1}(B).
\end{equation*}
Recall that $\gamma\in{U}$ and $B$ has positive measure on all curves
in $U$. Hence, $\gamma^{-1}(A)$ has positive Lebesgue measure and we conclude that $A$ has positive Lebesgue measure on all curves in $U\cap{R}$. Since $U$ is open and $R$ is residual in $\Gamma_{1}$, the complement of $U\cap R$ is not residual.
\end{proof}
\bibliographystyle{plain}
\bibliography{biblio}
\end{document}